\newcommand{\self}{\operatorname{self}}
\title{A Dirichlet problem in noncommutative potential theory\thanks{Research partially supported by NSF grant DMS--1464150.\newline
2010 Mathematics Subject Classification 31C05, 35J25, 47A56, 58J32}
}
\author{Kuang-Ru Wu}
\newcommand{\RN}[1]{%
  \textup{\uppercase\expandafter{\romannumeral#1}}%
}
\date{}
\theoremstyle{plain}
\newtheorem{thm}{Theorem}[section]
\newtheorem {lem}[thm]{Lemma}
\numberwithin{equation}{section}
\begin{document}

\parskip=6pt

\maketitle
\begin{abstract}
We prove the solvability of a Dirichlet problem for flat hermitian metrics on Hilbert bundles over compact Riemann surfaces with boundary. We also prove a factorization result for flat hermitian metrics on doubly connected domains.
\end{abstract}

\section{Introduction}
Let $(V,\gen{\blk,\blk})$ be a complex Hilbert space, $\End V$ the set of bounded linear operators on $V$, and $\End^+V$ the set of all positive invertible elements of $\End V$. Let $\overline{M}$ be a compact Riemann surface with boundary ($\partial M$ is automatically a real analytic manifold by the reflection principle). On the bundle $\overline{M}\times V \to \overline{M}$, a hermitian metric $h$ is a collection of hermitian inner products $h_z$ on $V$, for $z\in \overline{M}$, and it can be written  $h_z(v,w)=\gen{P(z)v,w}$ with $P: \overline{M} \to \End^+V$, $v$ and $w\in V$. Assume $P$ is $C^2$, the Chern connection of the metric is $P^{-1}\partial P$, and the curvature $R^P=\bar{\partial}(P^{-1}\partial P)=P^{-1}(P_{z\bar{z}}-P_{\bar{z}}P^{-1}P_z)d\bar{z}\wedge dz$ in a chart. In this paper, we will address the Dirichlet problem of extending a given metric on $\partial M \times V$ to a metric on $\overline{M}\times V$ that has zero curvature.  Our main result is the following:
\begin{thm}\label{thm:4}
Let $\overline{M}$ be a compact Riemann surface with boundary and $F\in C^{m}(\partial M, \End^+V)$, where $m=0,\infty, \text{or }\omega$. There exists a unique $P\in C^{m}(\overline{M}, \End^+V)\cap C^2(M,\End^+V)$ such that $R^P=0$ on $M$, and $P|_{\partial M}=F$. The same is true if we replace $C^m$ by $C^{k,\alpha}$ for $k$ a nonnegative integer and $0<\alpha<1$.
\end{thm}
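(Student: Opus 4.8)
The starting point is that $R^P=0$ on $M$ is equivalent, on any simply connected piece, to a holomorphic factorization of $P$. Indeed $R^P=d\theta+\theta\wedge\theta$ with $\theta=P^{-1}\partial P$, so $R^P=0$ says $\theta$ is flat; on a simply connected $\Omega$ one integrates $dg=g\theta$ to a holomorphic $g\colon\Omega\to\operatorname{GL}(V)$ with $\theta=g^{-1}\partial g$, whence $P=c\,g^*g$, and after absorbing the constant $P=g^*g$ with $g$ holomorphic and $\operatorname{GL}(V)$-valued, unique up to a constant left unitary factor. Thus on a disk $\overline{\mathbb D}$ the Dirichlet problem becomes: given $F\in C^m(\partial\mathbb D,\operatorname{End}^+V)$, find $g$ holomorphic on $\mathbb D$, of class $C^m$ up to $\partial\mathbb D$, with $g^*g=F$ on $\partial\mathbb D$ — an operator-valued spectral (outer/Wiener–Hopf) factorization of the positive symbol $F$. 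Since $F$ is continuous and bounded below on the circle, such a factorization exists and is unique up to a constant unitary, and preservation of the regularity class ($C^m$, resp.\ $C^{k,\alpha}$) up to the boundary is part of the factorization theory; this proves Theorem~\ref{thm:4} for $\overline M=\overline{\mathbb D}$ and is the prototype of the doubly connected factorization mentioned in the abstract.

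For a general $\overline M$, the interior $M$ has universal cover conformally equivalent to $\mathbb D$, with covering group $\Gamma\cong\pi_1(M)$ free, and $\partial M$ lifting to a $\Gamma$-invariant subset of $\partial\mathbb D$. The plan is to lift $F$ to a $\Gamma$-equivariant positive symbol $\widetilde F$ and solve the equivariant factorization problem: a holomorphic $g\colon\mathbb D\to\operatorname{GL}(V)$ with $g^*g=\widetilde F$ on the lift of $\partial M$ and $g(\gamma\cdot w)=\rho(\gamma)g(w)$ for a representation $\rho\colon\Gamma\to\operatorname{GL}(V)$; $\Gamma$-invariance of $g^*g$ then forces $\rho(\gamma)^*\rho(\gamma)=I$, i.e.\ $\rho$ unitary, and $P:=g^*g$ descends to the desired flat metric with $P|_{\partial M}=F$. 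Equivalently one can run a continuity method on $\overline M$ directly: join $F$ to the constant $I$ through positive symbols $F_t$ and show the set of solvable $t$ is open — the linearization of $R^P=0$ at a flat $P$ with Dirichlet data is a second-order elliptic operator on $\operatorname{End}V$-valued functions whose leading part is (conjugate to) the flat-connection Laplacian and which satisfies a maximum principle, hence is invertible — and closed, by the a priori estimates below, starting from $P\equiv I$ at $t=0$.

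The potential-theoretic heart, used both for the a priori estimates and for uniqueness, is the following subharmonicity: if $R^P=0$ then, writing $P=g^*g$ locally, for every $v\in V$ the functions $z\mapsto\log\langle P(z)v,v\rangle=\log\|g(z)v\|^2$ and $z\mapsto\log\langle P(z)^{-1}v,v\rangle=\log\|(g(z)^{-1})^*v\|^2$ are subharmonic on $M$ (squared norms of holomorphic, resp.\ antiholomorphic, $V$-valued functions are log-subharmonic), and since subharmonicity is local this holds globally on $M$. The maximum principle then forces $cI\le P(z)\le CI$ on $\overline M$ with $c,C$ depending only on $F$, which is the required $C^0$ bound; Schauder and elliptic bootstrapping for $R^P=0$ upgrade this to the $C^{k,\alpha}$ and $C^m$ estimates, and in the real-analytic case to analyticity up to $\partial M$ by reflection. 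For uniqueness, if $P,P'$ are two solutions then on $\mathbb D$ one has $\widetilde P=g^*g$, $\widetilde{P'}=(g')^*g'$ with the same boundary values on the lift of $\partial M$, so a suitably normalized $h:=g'g^{-1}$ is holomorphic with $h^*h=I$ there; the $C^0$ bounds make $h$ bounded on fundamental domains, a holomorphic map into the unitary group is locally constant, and equivariance gives $P=P'$.

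The step I expect to be the main obstacle is precisely the operator-valued factorization and, more pointedly, its $\Gamma$-equivariant form over a non-simply-connected $\overline M$: producing $g$ (and $g^{-1}$) with the prescribed boundary behaviour and a unitary monodromy $\rho$, with control in the correct regularity class up to $\partial M$, where the lift of $\partial M$ in $\partial\mathbb D$ abuts the limit set of $\Gamma$; one must also check that the factorization depends continuously on the data so that the continuity method closes. The remaining ingredients — the local equivalence $R^P=0\Leftrightarrow P=g^*g$, the subharmonicity estimates, invertibility of the linearization, Schauder bootstrapping, and the analytic reflection — are routine once this is in hand.
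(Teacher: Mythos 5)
Your outline shares the paper's skeleton (local factorization $P=g^{*}g$, continuity method, subharmonicity for $C^{0}$ bounds, regularity bootstrap), but there is a genuine gap at exactly the step you treat as routine: the invertibility of the linearization.

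You write that the linearized operator ``satisfies a maximum principle, hence is invertible.'' The maximum principle gives \emph{injectivity}. Surjectivity does not follow: in finite dimensions one would invoke the Fredholm alternative, but here $\dim V=\infty$, the embedding $C^{2,\alpha}(\overline M,\End V)\hookrightarrow C^{\alpha}(\overline M,\End V)$ is not compact, and Fredholm theory is unavailable. This is the crux of the problem and the reason the paper devotes an entire section (Section~\ref{est}) to it. There one proves the quantitative a priori bound $\|h\|_{2,\alpha}\le C\|Lh\|_{0,\alpha}$ with $C$ depending only on the coefficients, and then runs a \emph{second} continuity argument (Lemma~\ref{l,14}), deforming the linearized operator $L_{t}$ through the flat metrics $P_t$ back to the Laplacian $L_0=\Delta/2$, to conclude surjectivity. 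That a priori bound is itself not a plain maximum-principle argument: the zero-order term of $L$ has the \emph{wrong} sign, so the usual $C^0$ comparison fails. The paper circumvents this with the gauge identity $\tfrac12\Delta(H^{*}hH)=-H^{*}(Lh)H$ (which reduces matters to scalar Schauder estimates after pairing with a unit functional $\ell\in(\End V)^{*}$) and a comparison function of the form $u\cdot P$ for which $L(u\cdot P)=-\tfrac12(\Delta u)P$.

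On your alternative route: a $\Gamma$-equivariant holomorphic factorization over the universal cover with unitary monodromy is an attractive idea, but you acknowledge that constructing $g$ with the prescribed boundary behaviour and unitary $\rho$ is the main obstacle, and you do not supply it. The paper explicitly remarks that it was unable to write down a meaningful global factorization when $\pi_1(M)$ is nonabelian, and proves a factorization only for annuli (Theorem~\ref{thm:1.2}); so this route is not available as things stand. Two smaller points: ``Schauder and elliptic bootstrapping for $R^{P}=0$'' is not automatic for $\End V$-valued unknowns — the paper instead gets interior and boundary regularity from the local factorization $P=H^{*}H$ and a boundary-regularity theorem for the holomorphic factor $H$ — and your uniqueness sketch (``$h:=g'g^{-1}$ holomorphic with $h^{*}h=I$ there, a holomorphic map into the unitary group is locally constant'') does not work as stated, since $h$ is unitary only on the boundary lift, not in the interior; the clean argument is the maximum principle applied directly to the two solutions, as in the paper.
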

We mention briefly previous work when $\dim V <\infty$. Masani and Wiener prove a factorization result in \cite{wiener1957prediction} which can be used to solve the Dirichlet problem over the unit disc, with regularity weaker than continuous. In \cite{MR660145}, Lempert proves this factorization to H\"older classes. More generally, in \cite{MR1165874}, Donaldson solves a Dirichlet problem for the Hermitian Yang--Mills equations over K\"{a}hler manifolds with boundary, and in \cite{MR1216432} Coifman and Semmes solve it over domains in $\mathbb{C}^n$ which are regular for the Laplacian. When the base is one dimensional, Donaldson's and Coifman-Semmes' results reduce to existence of flat hermitian metrics. (Coifman and Semmes also solve a Dirichlet problem for norms more general than those coming from hermitian metrics. See also a more recent related paper \cite{2016arXiv160706306B}.)

Devinatz \cite{devinatz1961factorization} and Douglas \cite{douglas1966factoring} generalize Wiener-Masani factorization to infinite dimensional separable $V$, with the base still the unit disc (see also \cite[Lecture XI]{MR0171178}). For a general $V$ and various regularity classes, the Dirichlet problem over the unit disc is solved by Lempert in \cite{MR3738363}. Lempert's proof is by the continuity method and proceed by a global factorization of flat metrics. However, such a factorization is not available when the base is multiply connected. 

Our proof is also by the continuity method. Closedness is proved by a maximum principle and a local holomorphic factorization of flat metrics. Openness turns out to be harder than usual, because to deal with the linear partial differential equation originating from the implicit function theorem, Fredholm theory is not available. However, the linear equation has various symmetries that we can exploit to obtain the requisite a priori estimates. For details, see section \ref{est}.

The structure of this paper is as follows. In section \ref{pre} and \ref{est}, we collect a few preliminary lemmas and provide a priori estimates for both the nonlinear equation $R^P=0$ and its linearization. In section \ref{sec:pf}, we prove Theorem \ref{thm:4}. In section \ref{sec:db}, we prove a global factorization for flat hermitian metrics on doubly connected domains, an analog of \cite[Theorem 3.1]{MR3738363}. We consider annuli for convenience. In what follows, $\End^{\times}V$ is the set of invertible elements in $\End V$, and $\End^{\self} V$ is the set of self-adjoint operators.
\begin{thm}\label{thm:1.2}
Let $M=\{z\in \mathbb{C}:r_1<|z|<r_2\}$, and $F\in C(\partial M,\End^+V)$. There exist $H\in \mathcal{O}(M,\End^{\times}V)$ and $a\in \End^{\self} V$ such that the function
\begin{equation*}
P(z)=
\begin{cases}
H^*(z) \exp(a\log|z|^2) H(z) &\text{for $z\in M$} \\
F(z)  &\text{for $z \in \partial M$}
\end{cases}
\end{equation*}
is in $C(\overline{M},\End^+ V)$. Moreover, if $F\in C^{k,\alpha}(\partial M,\End^+V)$ for $k$ a nonnegative integer and $0<\alpha<1$, then $H$ extends to a function in $C^{k,\alpha}(\overline{M},\End^{\times}V)$.
\end{thm}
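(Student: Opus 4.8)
The plan is to reduce the statement to a factorization of a single flat metric and then exploit the cyclic fundamental group of $M$ through its universal cover.

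Apply Theorem~\ref{thm:4} to $F$ to obtain a flat metric $P\in C(\overline M,\End^{+}V)\cap C^{2}(M,\End^{+}V)$ — with $P\in C^{k,\alpha}(\overline M,\End^{+}V)$ when $F\in C^{k,\alpha}(\partial M,\End^{+}V)$ — satisfying $R^{P}=0$ on $M$ and $P|_{\partial M}=F$. It then suffices to factor such a $P$ as $P=H^{*}\exp(a\log|z|^{2})H$ on $M$ with $H\in\mathcal O(M,\End^{\times}V)$ and $a=a^{*}\in\End V$, with $H$ extending to $C^{k,\alpha}(\overline M)$ in the H\"older case. Write $M=\exp(\Sigma)$ with $\Sigma=\{w:\log r_{1}<\Re w<\log r_{2}\}$ and deck transformation $Tw=w+2\pi i$; then $\tilde P:=P\circ\exp$ is flat and $T$-invariant on the simply connected strip $\Sigma$. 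A local holomorphic factorization of flat metrics (Section~\ref{pre}) globalizes on $\Sigma$: the discrepancies between local factors are unitary-valued and locally constant, hence form a unitary local system over a simply connected base and are trivial (alternatively, transport the factorization from $\mathbb D$ through a biholomorphism $\Sigma\cong\mathbb D$ using \cite[Theorem~3.1]{MR3738363}). This yields $\tilde H\in\mathcal O(\Sigma,\End^{\times}V)$ with $\tilde P=\tilde H^{*}\tilde H$.

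From $\tilde P=\tilde P\circ T$ it follows that $U:=(\tilde H\circ T)\,\tilde H^{-1}$ satisfies $U^{*}U=\mathrm{id}$; as $U$ is holomorphic while $U^{-1}=U^{*}$ is antiholomorphic, $U$ is a constant unitary. By the spectral theorem choose $a=a^{*}\in\End V$ (indeed with $0\le a\le1$) with $e^{2\pi ia}=U$, and put $H(z):=e^{-a\log z}\,\tilde H(\log z)$ on $M$. Around a loop the two factors acquire the left multipliers $e^{-2\pi ia}$ and $U=e^{2\pi ia}$, which commute, so $H$ is a single-valued holomorphic function into $\End^{\times}V$. Since $\log|z|^{2}=\log z+\overline{\log z}$ and $a=a^{*}$, one has $\exp(a\log|z|^{2})=(e^{a\log z})^{*}e^{a\log z}$, hence
\[
H^{*}\exp(a\log|z|^{2})H=(e^{a\log z}H)^{*}(e^{a\log z}H)=\tilde H(\log z)^{*}\tilde H(\log z)=\tilde P\circ\log=P
\]
on $M$; together with $P|_{\partial M}=F$ this is the displayed formula, and the resulting $P$ lies in $C(\overline M,\End^{+}V)$ by the first step.

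There remains the boundary regularity when $F\in C^{k,\alpha}(\partial M,\End^{+}V)$, which is the crux. Then $P\in C^{k,\alpha}(\overline M)$, so $\tilde P$ is $C^{k,\alpha}$ up to $\partial\Sigma$, and flatness says exactly that the Chern form $\theta=\tilde P^{-1}\partial\tilde P=\theta_{w}\,dw$ is holomorphic on $\Sigma$; since $\partial(\tilde H^{*})=0$ we also have $\theta=\tilde H^{-1}\partial\tilde H$, i.e.\ $\partial_{w}\tilde H=\tilde H\theta_{w}$. Passing from $\tilde P$ to $\theta$ loses a derivative, so recovering $\tilde H$ by integration must restore it: near a boundary arc, interior gradient estimates for the $C^{k,\alpha}$ function $\tilde P$ show $\theta_{w}$ has a primitive $\Theta$ that is $C^{k,\alpha}$ up to the boundary, and $\tilde H$ is the fixed point of $G\mapsto\tilde H_{*}+\int G\,d\Theta$. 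After rescaling to a small half-disc abutting $\partial\Sigma$, this map is a contraction on $C^{k,\alpha}$, because the primitive of a holomorphic datum that is $C^{k-1,\alpha}$ (or merely integrable, when $k=0$) up to the boundary is $C^{k,\alpha}$ with norm small under rescaling; its fixed point is holomorphic inside and $C^{k,\alpha}$ up to the arc. As $\partial_{w}X=X\theta_{w},\ \partial_{\bar w}X=\theta_{w}^{*}X$ is a determined system (an ODE along each path), $\tilde H^{*}\tilde H$ and $\tilde P$ coincide once they agree at a point, so $\tilde H_{*}$ may be normalized to give $\tilde H^{*}\tilde H=\tilde P$; since any two solutions differ by a left constant unitary, patching over $\partial\Sigma$ and using interior smoothness gives $\tilde H\in C^{k,\alpha}(\overline\Sigma,\End^{\times}V)$, whence $H=e^{-a\log z}\tilde H(\log z)\in C^{k,\alpha}(\overline M,\End^{\times}V)$ because $\log$ is real-analytic on $\overline M$. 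The hard part is this last step — rebuilding a boundary-regular holomorphic factor from a derivative-losing flat connection — together with tracking the constant-unitary ambiguity of $\tilde H$ so that the gauge term is exactly $\exp(a\log|z|^{2})$.
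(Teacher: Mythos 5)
Your main factorization argument matches the paper's: take the flat extension $P$ from Theorem~\ref{thm:4}, pull back to the strip via the universal covering, factor $\tilde P=\tilde H^{*}\tilde H$ there by Lemma~\ref{l,3}, extract the constant unitary monodromy $U$ from periodicity, choose a self-adjoint $a$ with $e^{2\pi i a}=U$, and twist by $e^{-a\log z}$ to produce a single-valued $H$. Up to an inessential change of coordinates on the strip (you use $\exp$ and a vertical strip, the paper uses $e^{2\pi i z}$ and a horizontal strip) and the spectral/Borel logarithm, this is step-for-step the paper's proof, and the algebra checking single-valuedness of $H$ and the identity $H^{*}\exp(a\log|z|^{2})H=P$ is correct.

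Where you diverge is the last paragraph on boundary regularity, and this is also where a real gap appears. The paper does not try to reprove this; it cites \cite[Theorem~3.7]{MR3738363} (already used repeatedly in Section~\ref{sec:pf}), which is exactly the statement that a holomorphic factor $H$ of a flat metric is $C^{k,\alpha}$ up to a boundary arc whenever the boundary data is $C^{k,\alpha}$. You instead sketch a contraction-mapping reconstruction of $\tilde H$ from the Chern form $\theta_w=\tilde P^{-1}\partial_w\tilde P$, and that sketch does not close. Two concrete problems: (i) in the $k=0$ case $\tilde P$ is only $C^{0,\alpha}$ up to $\partial\Sigma$, so $\theta_w$ is not defined on the boundary at all, and the assertion that ``interior gradient estimates for the $C^{0,\alpha}$ function $\tilde P$ show $\theta_w$ has a primitive $\Theta$ that is $C^{0,\alpha}$ up to the boundary'' is unsupported --- in general the derivative of a H\"older function need not even be locally integrable, and the claim that the primitive of a ``merely integrable'' holomorphic function up to the boundary is $C^{0,\alpha}$ is false (e.g.\ the primitive of $z^{-\beta}$ for $\beta$ close to $1$ is only $C^{0,1-\beta}$, and for genuinely $L^{1}$ data one gets no H\"older bound at all); (ii) the rescaling step that is supposed to make $G\mapsto\tilde H_{*}+\int G\,d\Theta$ a contraction on $C^{k,\alpha}$ of a small half-disc ignores that the H\"older seminorm scales like $\lambda^{\alpha}$ while the $C^{0}$ norm does not shrink, so ``small under rescaling'' is not automatic. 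What you are in fact trying to reprove here is precisely Lempert's Theorem~3.7, which is a nontrivial result; the clean fix is simply to invoke it, as the paper does, to get $\tilde H\in C^{k,\alpha}$ up to $\partial\Sigma$ (and hence $K$ and the final $H$ in $C^{k,\alpha}(\overline M,\End^{\times}V)$).
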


Straightforward calculations show that $P$ in the above theorem has curvature 0. We conjecture similar factorizations to exist in $m$-connected domains. However, when $m$ is at least three, the fundamental group of $M$ is nonabelian, in addition to our already noncommutative operators, and we haven't been able to write down a meaningful factorization. Such factorizations might provide another proof of Theorem \ref{thm:4} without invoking a priori estimates, as in \cite{MR3738363}.

As for nontrivial Hilbert bundles, it is known that such bundles can be trivialized over open Riemann surfaces. 
It is likely that this is true also over Riemann surfaces with boundary, but we do not pursue such question in this paper. 

I would like to thank Chi Li, Hengrong Du, and Seongjun Choi for discussions and their critical comments, and Carlos Salinas for his suggestions on the presentation of this paper. I am grateful to L\'aszl\'o Lempert for introducing me to this problem, and for his constant encouragements, discussions, and inspirations.  

\section{Preliminary lemmas}\label{pre}
We will deal with spaces of maps with values in $\End V$, such as $C^{k,\alpha}(\overline{M},\End V)$, and we briefly indicate what they are. First, $\End V$ with the operator norm $||\cdot||_{\op}$ is a Banach space. Smoothness for maps with values in $\End V$ will always refer to this Banach space topology. $\mathcal{O}(M,\End V)$ denote the space of holomorphic maps, those that are complex differentiable in charts. Similarly, given a smooth manifold $\overline{N}$, possibly with boundary, if $k=0,1,2...$ and $0<\alpha<1$, $C^k(\overline{N},\End V)$ and $C^{k,\alpha}(\overline{N}, \End V)$ consist of maps that are $C^k$, respectively $C^{k,\alpha}$ in charts. These two can be given a Banach algebra structure, if $\overline{N}$ is compact and a finite open cover $\{U_i\}$ of $\overline{N}$ is fixed so that each $\overline{U_i}$ is contained in a chart. For $f\in C^{k,\alpha}(\overline{N},\End V)$, say, one just computes the corresponding H\"older norms in each $U_i$ using the local coordinates, and defines $||f||_{k,\alpha,N}$ as the sum of those H\"older norms. With a suitable scaling it can be arranged that $||\cdot||_{k,\alpha,N}$ is submultiplicative, namely, $C^{k,\alpha}(\overline{N}, \End V)$ is a Banach algebra. Similarly, $C^k(\overline{N},\End V)$ also carries a Banach algebra structure. For more details, see \cite[p.610]{MR3738363}.

We set $C^{\infty}=\cap_k C^k$, and also write $C$ for $C^0$. Finally, if $\overline{N}$ is a real analytic manifold, we denote by $C^{\omega}(\overline{N},\End V)$ the space of real analytic maps, those that can be expanded at each point of $\overline{N}$ in a power series in a chart.

In traditional potential theory, a real-valued harmonic function on a simply connected open set in $\mathbb{C}$ is the real part of a holomorphic function, unique up to a purely imaginary additive constant. There is a corresponding result in noncommutative potential theory.
\begin{lem}\label{l,3}
If $M$ is a simply connected Riemann surface and $P\in C^2(M, \End^+V)$ is flat, namely $R^P=0$, then $P=H^*H$ where $H\in \mathcal{O}(M,\End^{\times}V)$. If $P=K^*K$ is also such a factorization, then $H=UK$, where $U\in \End V$ is unitary.
\end{lem}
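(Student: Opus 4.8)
\emph{Proof sketch.} The plan is to integrate the flat connection and read a factorization off it. Set $A=P^{-1}\partial P$, the Chern connection form, a $C^{1}$ one-form of type $(1,0)$ with values in $\End V$. On a Riemann surface the curvature of $d+A$ on $M\times V$ is $dA+A\wedge A=\bar\partial A=R^{P}$ (the $(2,0)$-part being zero), so the hypothesis $R^{P}=0$ says exactly that $d+A$ is flat. Since $M$ is simply connected, I would produce a global holomorphic $H_{0}:M\to\End^{\times}V$ with $H_{0}^{-1}\partial H_{0}=A$; in a chart this is the linear equation $\partial_{z}H_{0}=H_{0}\cdot P^{-1}P_{z}$, whose coefficient $P^{-1}P_{z}$ is holomorphic precisely because $\bar\partial A=0$. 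One solves this holomorphic ODE locally with invertible initial value --- it stays in $\End^{\times}V$ because that set is open in $\End V$ --- and the local solutions patch to a global $H_{0}$ since flatness makes parallel transport homotopy invariant and $M$ is simply connected. This globalization is where the real work lies, and I expect it to be the main obstacle; a little care is needed because $V$ may be infinite dimensional.

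Next I would check that $H_{0}$ already factors $P$ up to a constant. Since $H_{0}$ is holomorphic, $H_{0}^{*}$, and hence $(H_{0}^{*})^{-1}$, is annihilated by $\partial$. Put $N=(H_{0}^{*})^{-1}PH_{0}^{-1}$; it maps into $\End^{+}V$ because $N^{*}=N$ and $\langle Nv,v\rangle=\langle PH_{0}^{-1}v,H_{0}^{-1}v\rangle>0$ for $v\neq0$. Using $\partial(H_{0}^{-1})=-H_{0}^{-1}(\partial H_{0})H_{0}^{-1}$ and $H_{0}^{-1}\partial H_{0}=P^{-1}\partial P$,
\[
\partial N=(H_{0}^{*})^{-1}\bigl(\partial P-PH_{0}^{-1}\partial H_{0}\bigr)H_{0}^{-1}=(H_{0}^{*})^{-1}\bigl(\partial P-\partial P\bigr)H_{0}^{-1}=0,
\]
and since $N=N^{*}$ also $\bar\partial N=(\partial N)^{*}=0$. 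Hence $N$ is locally constant, so $N\equiv c$ for some $c\in\End^{+}V$ because $M$ is connected, and then $P=H_{0}^{*}cH_{0}=(c^{1/2}H_{0})^{*}(c^{1/2}H_{0})$. Thus $H:=c^{1/2}H_{0}\in\mathcal{O}(M,\End^{\times}V)$ satisfies $P=H^{*}H$.

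For uniqueness, suppose also $P=K^{*}K$ with $K\in\mathcal{O}(M,\End^{\times}V)$, and set $U=HK^{-1}\in\mathcal{O}(M,\End^{\times}V)$. Then $U^{*}U=(K^{*})^{-1}H^{*}HK^{-1}=(K^{*})^{-1}PK^{-1}=I$, and symmetrically $UU^{*}=I$, so $U(z)$ is unitary for every $z$. For fixed $v\in V$ the map $z\mapsto U(z)v$ is holomorphic with constant norm $\|v\|$, so $0=\partial_{z}\partial_{\bar z}\|U(z)v\|^{2}=\|\partial_{z}(U(z)v)\|^{2}$; hence $U(\cdot)v$ is constant on the connected surface $M$, and as $v$ is arbitrary, $U$ is a constant unitary operator and $H=UK$.
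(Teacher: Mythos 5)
Your argument is correct and is essentially the proof that the paper outsources to Demailly: integrate the flat holomorphic connection $A=P^{-1}\partial P$ to a holomorphic frame change $H_0$ (using simple connectedness), observe that $(H_0^*)^{-1}PH_0^{-1}$ is a constant positive operator $c$, and set $H=c^{1/2}H_0$. One minor imprecision: openness of $\End^{\times}V$ does not by itself keep $H_0$ invertible along paths; for the linear equation $\partial H_0=H_0A$ one should instead note that the companion equation $\partial G=-AG$ has a local solution with $\partial(H_0G)=0$, forcing $H_0G\equiv I$.
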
 
If $\dim V$=1, we recover the traditional result by taking logarithms.
\begin{proof}
This lemma is actually true for $M$ a simply connected complex manifold, see \cite[Chapter \RN{5}.6]{demailly1997complex}. Although the bundle is of finite rank there, the proof carries over to infinite rank easily.
\end{proof}

\begin{lem}\label{lem:2}
Let $\overline{M}$ be a compact Riemann surface with boundary, $P_j\in C(\overline{M},\End^+V)\cap C^2(M, \End^+V)$, and $R^{P_j}=0$, $j\in \mathbb{N}$. If $P_j|_{\partial M}$ converges in $C(\partial M, \End^+V)$, then $P_j$ converges in $C(\overline{M}, \End^+V)$.
\end{lem}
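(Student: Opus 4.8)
The plan is to prove a maximum principle for ``ratios'' of flat metrics and then invoke completeness of $C(\overline M,\End V)$. Given $P,Q\in C(\overline M,\End^+V)\cap C^2(M,\End^+V)$ with $R^P=R^Q=0$, consider
\[
u_{P,Q}(z)=\log\bigl\|\,P(z)^{-1/2}Q(z)P(z)^{-1/2}\,\bigr\|,\qquad z\in\overline M .
\]
This is continuous on $\overline M$, since $T\mapsto T^{-1/2}$ and $T\mapsto\|T\|$ are continuous on $\End^+V$. I claim $u_{P,Q}$ is subharmonic on $M$. Cover $M$ by simply connected coordinate discs; on each, Lemma \ref{l,3} yields holomorphic $H,K$ with values in $\End^{\times}V$ such that $P=H^*H$ and $Q=K^*K$. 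A conjugation shows $P^{-1/2}QP^{-1/2}$ is similar to $P^{-1}Q=H^{-1}(KH^{-1})^*(KH^{-1})H$, hence to the positive self-adjoint operator $(KH^{-1})^*(KH^{-1})$; being itself positive self-adjoint, $P^{-1/2}QP^{-1/2}$ therefore has operator norm $\|KH^{-1}\|^2$. Since $B:=KH^{-1}$ is holomorphic with values in the Banach algebra $\End V$, $\log\|B\|$ is subharmonic — it is the (continuous) supremum over unit vectors $v,w$ of the subharmonic functions $z\mapsto\log|\gen{B(z)v,w}|$ — so $u_{P,Q}=2\log\|B\|$ is subharmonic on that disc. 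Subharmonicity is local, so $u_{P,Q}$ is subharmonic on $M$, and the maximum principle gives $\sup_{\overline M}u_{P,Q}=\max_{\partial M}u_{P,Q}$. Equivalently: for $c>0$, if $Q\le cP$ on $\partial M$ then $Q\le cP$ on $\overline M$.

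Next I would extract uniform bounds. Since $P_j|_{\partial M}$ converges in $C(\partial M,\End^+V)$ and $\partial M$ is compact, there is $C\ge1$ with $C^{-1}I\le P_j\le CI$ on $\partial M$ for all $j$. Applying the previous step to the pairs $(I,P_j)$ and $(P_j,I)$, using $R^I=0$, gives $\|P_j\|\le C$ and $\|P_j^{-1}\|\le C$, that is $C^{-1}I\le P_j\le CI$, on all of $\overline M$, uniformly in $j$.

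For the Cauchy estimate, fix large $j,k$ and put $\varepsilon_{jk}=\sup_{\partial M}\|P_j-P_k\|$, so that $C\varepsilon_{jk}<1$. On $\partial M$,
\[
\bigl\|P_j^{-1/2}(P_k-P_j)P_j^{-1/2}\bigr\|\le\|P_j^{-1}\|\,\varepsilon_{jk}\le C\varepsilon_{jk},
\]
hence $(1-C\varepsilon_{jk})P_j\le P_k\le(1+C\varepsilon_{jk})P_j$ on $\partial M$; in the notation above this says $u_{P_j,P_k}\le\log(1+C\varepsilon_{jk})$ and $u_{P_k,P_j}\le-\log(1-C\varepsilon_{jk})$ on $\partial M$. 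The maximum principle propagates both inequalities to all of $\overline M$, so $(1-C\varepsilon_{jk})P_j\le P_k\le(1+C\varepsilon_{jk})P_j$ there, whence $\|P_k-P_j\|\le C\varepsilon_{jk}\|P_j\|\le C^2\varepsilon_{jk}$ on $\overline M$. As $\varepsilon_{jk}\to0$, $(P_j)$ is Cauchy in the Banach space $C(\overline M,\End V)$; its limit $P$ satisfies $P\ge C^{-1}I$ by the uniform lower bound, so $P\in C(\overline M,\End^+V)$, which is the assertion.

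The step I expect to be the crux is the subharmonicity of $u_{P,Q}$: this is where flatness is used, via the local holomorphic factorization (Lemma \ref{l,3}) together with the classical fact that $\log$ of the operator norm of a holomorphic $\End V$-valued map is subharmonic. The remainder is routine manipulation of operator inequalities and the ordinary maximum principle; the only point to keep in mind is that the maximum principle as used here requires every connected component of $\overline M$ to meet $\partial M$, which we take as part of the standing hypotheses.
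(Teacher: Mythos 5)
Your argument is correct, and it is essentially the argument the paper is invoking by its reference to Lempert's Corollary~3.3: compare two flat metrics via the quantity $\log\|P^{-1/2}QP^{-1/2}\|$, which is subharmonic by the local holomorphic factorization of Lemma~\ref{l,3} and the classical fact that $\log$ of the operator norm of a holomorphic operator-valued map is subharmonic, and then propagate boundary sandwich inequalities by the maximum principle to get a Cauchy estimate in $C(\overline M,\End V)$.
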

\begin{proof}It is basically the same as the proof of \cite[Corollary 3.3]{MR3738363}.\end{proof}

\begin{lem}\label{lem:3}Let $D\subset \mathbb{C}$ be the unit disc, $H_j\in \mathcal{O}(D,\End^{\times}V)$, and $H_j(0)\in \End^+V$. If $H_j^*H_j$ converges to some $P\in C(D,\End^+V)$, then there exists $H\in \mathcal{O}(D,\End^{\times}V)$ such that $H_j$ converges, locally uniformly, to $H$ on $D$.\end{lem}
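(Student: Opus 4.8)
The plan is to exploit the factorization structure together with the uniqueness part of Lemma~\ref{l,3}. Since $P\in C(D,\End^+V)$, on every compact subset $K\Subset D$ the values $P(z)$ and $P(z)^{-1}$ are uniformly bounded in operator norm, so for large $j$ the same holds for $H_j^*H_j$ and $(H_j^*H_j)^{-1}$; in particular $\|H_j(z)v\|^2 = \langle H_j^*H_j(z)v,v\rangle$ is uniformly bounded above and below (by a multiple of $\|v\|^2$) on $K$. Thus $\{H_j\}$ and $\{H_j^{-1}\}$ are locally uniformly bounded in operator norm on $D$, and being holomorphic they form a normal family: any subsequence has a further subsequence converging locally uniformly to some $H\in\mathcal{O}(D,\End V)$, and the corresponding inverses converge to some $G\in\mathcal{O}(D,\End V)$ with $GH=HG=I$, so $H\in\mathcal{O}(D,\End^{\times}V)$. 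Passing to the limit in $H_j^*H_j\to P$ along such a subsequence gives $H^*H=P$ on $D$.

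The second step is to upgrade subsequential convergence to convergence of the full sequence, and here the uniqueness in Lemma~\ref{l,3} is the key. Suppose $H$ and $\tilde H$ are limits of two subsequences; both satisfy $H^*H=\tilde H^*\tilde H=P$, so by Lemma~\ref{l,3} (applied on the simply connected $D$) we have $\tilde H=UH$ for a fixed unitary $U\in\End V$. The normalization $H_j(0)\in\End^+V$ passes to the limit: $H(0)$ and $\tilde H(0)$ are both positive (positivity is preserved under operator-norm limits, and invertibility we already have), and $\tilde H(0)=UH(0)$. So $UH(0)$ is positive with $H(0)$ positive and invertible; writing $U = (UH(0))H(0)^{-1}$ and taking adjoints, one gets $U^*=U$, hence $U$ is a self-adjoint unitary, and then $UH(0)=H(0)U$ forces, via uniqueness of positive square roots ($H(0)^2 = (UH(0))^2$ since both equal... actually more directly: $U H(0) \ge 0$ and $H(0)\ge 0$ with $U$ unitary implies $U=I$ by polar decomposition uniqueness). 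Therefore every subsequential limit coincides, and since $\{H_j\}$ is a normal family this forces $H_j\to H$ locally uniformly on all of $D$.

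The main obstacle I expect is the last normalization argument — pinning down that the unitary ambiguity $U$ must be the identity given only $H_j(0)\in\End^+V$. One must be careful that positivity (not merely self-adjointness or nonnegativity) is genuinely preserved in the limit and combine this with the polar decomposition / uniqueness-of-positive-square-root to kill $U$; the cleanest route is to observe $H(0) = |H(0)| = (H(0)^*H(0))^{1/2} = P(0)^{1/2}$ is determined by $P$ alone, so $\tilde H(0)=H(0)$, whence $U H(0)=H(0)$ and invertibility of $H(0)$ gives $U=I$. Everything else — the normal-family/Montel argument for $\End V$-valued holomorphic maps, and passing limits through the adjoint and the product — is routine, using that multiplication and $*$ are continuous on bounded sets in $\End V$ and that locally uniform limits of holomorphic maps are holomorphic.
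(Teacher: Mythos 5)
Your first step — extracting a locally uniformly convergent subsequence from the locally bounded family $\{H_j\}$ by ``normal families'' — is the crux, and it fails precisely in the setting of this paper. Montel's theorem for Banach-space-valued holomorphic maps requires the closed unit ball of the target to be compact, i.e.\ the target to be finite-dimensional. When $\dim V=\infty$, a locally bounded sequence in $\mathcal{O}(D,\End V)$ need not have any locally uniformly convergent subsequence; already constant sequences (take $H_j\equiv I+\tfrac12 P_j$ with $P_j$ the rank-one projection onto the $j$-th basis vector of $\ell^2$) show that local boundedness of $H_j$ and $H_j^{-1}$ alone gives nothing. This is not a peripheral technicality here: the whole point of the paper is that $V$ may be infinite-dimensional, and the failure of exactly this kind of compactness is a recurring theme (it is why the authors cannot invoke the Fredholm alternative in section 4). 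So your argument, as written, establishes nothing — the uniqueness step (Lemma~\ref{l,3} plus pinning down $U=I$ from $H_j(0)\in\End^+V$, giving $H(0)=P(0)^{1/2}$) is correct, but it only removes ambiguity among subsequential limits whose existence you have not shown.

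The paper itself refers this lemma to \cite[Theorem 3.1]{MR3738363}, and the argument there must exploit the factorization structure rather than abstract compactness. One workable line: first get $H_j(0)\to P(0)^{1/2}$ from positivity and continuity of the square root; then pass to the sesquiholomorphic polarization $\widetilde P_j(z,w):=H_j(\bar w)^*H_j(z)$, which is holomorphic on $D\times D$ and locally bounded (by $\|P_j\|$, $\|P_j^{-1}\|$), and whose restriction to the maximally totally real set $\{w=\bar z\}$ is $P_j\to P$; a Cauchy/Fourier coefficient estimate upgrades this to locally uniform convergence of $\widetilde P_j$ on $D\times D$, whence $H_j(z)=H_j(0)^{-1}\widetilde P_j(z,0)$ converges. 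Alternatively one can use interior elliptic estimates for the flatness equation to get $P_j\to P$ in $C^1_{\mathrm{loc}}$, hence convergence of the Chern connections $A_j=P_j^{-1}\partial P_j=H_j^{-1}\partial H_j$, and then integrate the linear ODE $\partial H_j=H_j A_j$ from the convergent initial data $H_j(0)$. Either route is substantively different from, and necessary in place of, the Montel step you rely on.
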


\begin{proof} See the proof of \cite[Theorem 3.1]{MR3738363}.\end{proof}

\section{A priori estimates}\label{est}                                                                                                                                                                                  Fix a smooth positive $(1,1)$-form $\omega$ on $\overline{M}$ and define a map $\Lambda$ sending $(1,1)$-forms to functions: $\Lambda(\phi)=-\phi/\omega$, for a $(1,1)$-form $\phi$. Locally, $\omega =\sqrt{-1}g dz\wedge d\bar{z}$, where $g$ is a positive smooth function, so if $\phi=vdz\wedge d\bar{z}$ locally, then $\Lambda (\phi)=\sqrt{-1}v/g$.  

Fix $0<\alpha<1$, assume $P\in C^{2,\alpha}(\overline{M},\End^+V)$ is flat, and $A=P^{-1}\partial P$. We associate the following differential operator with $P$:  
\begin{equation*}
L:C^{2,\alpha}(\overline{M},\End^{\self} V)\longrightarrow C^{\alpha}(\overline{M}, \End^{\self} V)$$ $$h\longmapsto \sqrt{-1}\Lambda(\bar{\partial}\partial h-A^*\wedge \partial h-\bar{\partial}h\wedge A+A^*\wedge h\wedge A).
\end{equation*}
On a chart, $Lh=(1/g) \mathscr{L}h$, where $\mathscr{L}h=h_{z\bar{z}}-P_{\bar{z}}P^{-1}h_z-h_{\bar{z}}P^{-1}P_z+P_{\bar{z}}P^{-1}hP^{-1}P_z$. The reason for studying $L$ is that it is the linearization of curvature, as we shall see in section \ref{sec:pf}. The main result in this section is
\begin{thm}\label{cor:11}
If $h\in C^{2,\alpha}(\overline{M},\End^{\self} V)$ and $h|_{\partial M}=0$, then $$\|h\|_{2,\alpha, M}\leq C\|Lh\|_{0,\alpha,M}$$ where $C=C(\|P\|_{2,\alpha},\|P^{-1}\|_{0,\alpha})$.
\end{thm}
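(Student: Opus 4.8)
I would obtain the estimate by combining two ingredients: a routine global Schauder estimate, which reduces everything to controlling $\|h\|_{0,M}$, and a maximum-principle bound $\|h\|_{0,M}\le C\|Lh\|_{0,M}$. The second is the crux, and is where the flatness of $P$ must be used, since (as noted in the introduction) $L$ is not Fredholm between the relevant Banach spaces. \emph{Schauder step:} in a chart, $Lh=(1/g)\,\mathscr L h$ is uniformly elliptic with \emph{scalar} principal part $(1/4g)\Delta$, and its lower-order coefficients are left and right multiplications by expressions in $P_{\bar z}P^{-1}$ and $P^{-1}P_z$, which lie in $C^{\alpha}$ with norms controlled by $\|P\|_{2,\alpha}$ and $\|P^{-1}\|_{0,\alpha}$. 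Because the principal part is diagonal, the interior and boundary Schauder estimates for the scalar Laplacian carry over verbatim to $\End V$-valued maps; since $\partial M$ is smooth and $h|_{\partial M}=0$, the Dirichlet global Schauder estimate gives
$$\|h\|_{2,\alpha,M}\le C_1\bigl(\|Lh\|_{0,\alpha,M}+\|h\|_{0,M}\bigr),\qquad C_1=C_1(\|P\|_{2,\alpha},\|P^{-1}\|_{0,\alpha}).$$
It thus suffices to bound $\|h\|_{0,M}$ by $\|Lh\|_{0,M}$.

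For the maximum-principle bound I would first fix a finite good cover $\{U_i\}$ of $\overline M$ by coordinate charts and, using Lemma \ref{l,3} on the simply connected sets $U_i\cap M$, write $P=H_i^*H_i$ with $H_i\in\mathcal O(U_i\cap M,\End^{\times}V)$; the $C^*$-identity gives the pointwise bounds $\|H_i\|_{\op}^2=\|P\|_{\op}$ and $\|H_i^{-1}\|_{\op}^2=\|P^{-1}\|_{\op}$. A direct computation using $\bar\partial H_i=0$ yields the gauge identity $\mathscr L(H_i^*\,k\,H_i)=H_i^*\,k_{z\bar z}\,H_i$ for self-adjoint $k$; hence, putting $\mathbf k_i:=(H_i^{-1})^*hH_i^{-1}$, the section $\mathbf k_i$ solves $\Delta_M\mathbf k_i=\mathbf f_i$, where $\Delta_M$ is the Laplace--Beltrami operator of the metric underlying $\omega$ and $\mathbf f_i$ is a fixed multiple of $(H_i^{-1})^*(Lh)H_i^{-1}$. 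On overlaps, $H_i^*H_i=H_j^*H_j$ forces $H_i=U_{ij}H_j$ with $U_{ij}$ a locally constant \emph{unitary}, by the rigidity clause of Lemma \ref{l,3}; therefore $\mathbf k_i=U_{ij}\mathbf k_jU_{ij}^*$ and $\mathbf f_i=U_{ij}\mathbf f_jU_{ij}^*$, so $u:=\|\mathbf k_i\|_{\op}$ and $\|\mathbf f_i\|_{\op}$ are well-defined functions on $M$. Moreover $u\le\|H_i^{-1}\|_{\op}^2\|h\|_{\op}\le\|P^{-1}\|_{0,M}\|h\|_{\op}\to0$ as one approaches $\partial M$ (because $h|_{\partial M}=0$), so $u$ extends continuously to $\overline M$ with $u|_{\partial M}=0$; and $\|\mathbf f_i\|_{\op}\le c\,\|P^{-1}\|_{0,M}\|Lh\|_{0,M}$ on $M$ for a dimensional constant $c$.

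I would then run a scalar maximum principle for $u$. The whole point of passing to $\mathbf k_i$ is that a maximum principle applied directly to $\|h\|_{\op}$ fails --- at an interior extremum the first-order terms of $\mathscr L$ do not vanish in the relevant directions --- whereas the gauge transformation removes them. Writing, in each chart, $u=\sup\{\operatorname{Re}\langle\mathbf k_i(z)v,w\rangle:\|v\|=\|w\|=1\}$, each $\operatorname{Re}\langle\mathbf k_i v,w\rangle$ is a $C^2$ function with $\Delta_M\operatorname{Re}\langle\mathbf k_i v,w\rangle=\operatorname{Re}\langle\mathbf f_i v,w\rangle\ge-\|\mathbf f_i\|_{\op}$; since a supremum of subsolutions is a subsolution and $u$ is continuous (the operator norm being $1$-Lipschitz, so no compactness of the unit sphere is needed), $u$ is a distributional subsolution of $\Delta_M u=-c\,\|P^{-1}\|_{0,M}\|Lh\|_{0,M}$ on $M$. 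Comparing $u$ with $c\,\|P^{-1}\|_{0,M}\|Lh\|_{0,M}\,G_0$, where $G_0\ge0$ is the bounded solution of $\Delta_M G_0=-1$, $G_0|_{\partial M}=0$ (which exists since $\overline M$ is a compact manifold with smooth boundary), the difference is subharmonic and $\le0$ on $\partial M$, hence $\le0$ on $M$; thus $u\le C_M\,\|P^{-1}\|_{0,M}\|Lh\|_{0,M}$, and $\|h\|_{0,M}\le\|P\|_{0,M}\,u\le C_M\,\|P\|_{0,M}\|P^{-1}\|_{0,M}\|Lh\|_{0,M}$. Combined with the Schauder step, this gives the claimed bound.

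I expect the maximum-principle step to be the main obstacle: there is no genuine maximum principle for the operator-valued unknown $h$ and no Fredholm theory available, so one is forced to gauge $L$ into the Laplacian via the local holomorphic factorization of the flat $P$, and then carry out the subharmonicity argument for the operator norm of the gauged unknown. The latter needs care when $V$ is infinite-dimensional or non-separable, since Hilbert--Schmidt norms and traces are unavailable; the way out is to realize that norm as a supremum of honestly subharmonic scalar functions $\operatorname{Re}\langle\cdot\,v,w\rangle$. It is essential that the transition maps $U_{ij}$ between the local factorizations are unitary --- this is exactly the rigidity in Lemma \ref{l,3} --- for otherwise $\|\mathbf k_i\|_{\op}$ would not even be globally defined.
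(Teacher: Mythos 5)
Your proposal reproduces the paper's two-step structure --- a global Schauder estimate in terms of $\|h\|_{0,M}$ (the paper's Lemma~\ref{lem:9}) plus a $C^0$ bound via a maximum principle (Theorem~\ref{thm:10}) --- and the maximum-principle step follows the paper closely: gauge $L$ to the Laplacian through a local holomorphic factorization $P=H_i^*H_i$, realize a scalar quantity as a supremum of scalar subsolutions, and compare with a barrier built from the solution of $\Delta G=-1$, $G|_{\partial M}=0$. Your $u=\|\mathbf k_i\|_{\op}$, globally well-defined on $M$ because the transition maps between the local factorizations are unitary by the rigidity clause of Lemma~\ref{l,3}, plays exactly the role of the paper's intrinsic quantity $S_{P,h}(z)=\sup_{\|v\|_{P(z)}=1}\langle h(z)v,v\rangle$; the two differ only in that the paper treats $h$ and $-h$ separately while you take both signs at once through the operator norm. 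The one genuine divergence is in the Schauder step. You assert that interior and boundary Schauder estimates ``carry over verbatim'' to $\End V$-valued maps because the principal part is scalar, and absorb the operator-valued lower-order terms by interpolation. That is defensible --- the potential estimates underlying Schauder theory are kernel-level and go through for Bochner integrals with any Banach target --- but it is stated without proof or reference, and the absorption of the left/right-multiplication lower-order terms is left implicit. The paper avoids invoking any Banach-valued Schauder theory: it first gauges even in this step, obtaining the pure Poisson equation $(H^*hH)_{z\bar z}=H^*(\mathscr L h)H$, then applies a norm-one functional $l\in(\End V)^*$ to obtain a genuinely scalar equation to which the Gilbarg--Trudinger estimates apply as stated, and finally recovers the operator-norm estimate by Hahn--Banach, peeling off $l$ and then $H$. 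The paper's route costs more bookkeeping but rests only on the scalar theory; yours is shorter but leans on an unproved (though true) Banach-valued extension of the Schauder estimates.
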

We begin with a somewhat standard estimate.
\begin{lem}\label{lem:9}
If $h\in C^{2,\alpha}(\overline{M},\End^{\self} V)$ and $h|_{\partial M}=0$, then$$\|h\|_{2,\alpha, M}\leq C(\|h\|_{0,M}+\|Lh\|_{0,\alpha,M})$$ where $C=C(\|P\|_{2,\alpha},\|P^{-1}\|_{0,\alpha})$.
\end{lem}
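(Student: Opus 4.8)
The strategy is the usual interior-plus-boundary Schauder estimate for a second-order elliptic system, adapted to the operator-valued setting. On a coordinate chart, $L$ has the form $Lh = (1/g)\mathscr{L}h$ where $\mathscr{L}h = h_{z\bar z} - P_{\bar z}P^{-1}h_z - h_{\bar z}P^{-1}P_z + P_{\bar z}P^{-1}hP^{-1}P_z$; this is an elliptic operator whose principal part is the scalar Laplacian $\partial_z\partial_{\bar z}$ acting on each matrix entry (or, more precisely, on the $\End^{\self}V$-valued function), with first-order coefficients built from $P^{-1}\partial P \in C^{1,\alpha}$ and zeroth-order coefficient in $C^\alpha$; all these coefficient norms are controlled by $\|P\|_{2,\alpha}$ and $\|P^{-1}\|_{0,\alpha}$. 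The point is that the standard Agmon--Douglis--Nirenberg / Schauder theory for the Laplacian goes through verbatim with $V$ in place of $\mathbb{C}$, because the estimates are proved by freezing coefficients and using the explicit Newtonian/Green potential, and convolution with a scalar kernel commutes with the Banach-space structure of $\End^{\self}V$; one cites, e.g., the version in \cite{MR3738363} used there for the disc.

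First I would cover $\overline{M}$ by finitely many charts: interior charts $U_i \Subset M$ and boundary charts $U_j$ in which $\partial M$ is flattened to a segment of $\mathbb{R}$ (legitimate since $\partial M$ is real analytic, in particular smooth, and we may straighten it by a $C^\infty$, indeed $C^\omega$, diffeomorphism). On each interior chart the interior Schauder estimate gives $\|h\|_{2,\alpha,U_i'} \le C(\|\mathscr{L}h\|_{0,\alpha,U_i} + \|h\|_{0,U_i})$ on a slightly smaller $U_i'$; on each boundary chart, since $h|_{\partial M}=0$, the boundary Schauder estimate for the Dirichlet problem (with zero boundary data) gives $\|h\|_{2,\alpha,U_j'} \le C(\|\mathscr{L}h\|_{0,\alpha,U_j} + \|h\|_{0,U_j})$. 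Summing over the finite cover and absorbing, using that the local $\End V$-valued norms are all equivalent to the global $\|\cdot\|_{k,\alpha,M}$ up to constants depending only on the fixed cover and the fixed metric data, and using $\|\mathscr{L}h\|_{0,\alpha,M} \le C\|Lh\|_{0,\alpha,M}$ since $g$ and $1/g$ are smooth and bounded away from $0$, yields the claimed inequality.

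The only genuinely nonroutine point is making sure the classical Schauder machinery is valid for functions valued in the infinite-dimensional Banach algebra $\End^{\self}V$ rather than $\mathbb{R}^N$; as noted, this is harmless because the proofs are potential-theoretic and the relevant integral kernels are scalar, but I would spell out the one-line justification (or refer to where it is done in \cite{MR3738363}) rather than leave it implicit. A secondary bookkeeping issue is tracking that the constant depends only on $\|P\|_{2,\alpha}$ and $\|P^{-1}\|_{0,\alpha}$ and not on finer data: this follows because the coefficients of $\mathscr{L}$, namely $P_{\bar z}P^{-1}$ and $P_{\bar z}P^{-1}\,\cdot\,P^{-1}P_z$, are polynomial expressions in $P$, $P^{-1}$, and first derivatives of $P$, hence bounded in $C^{1,\alpha}$, respectively $C^{\alpha}$, in terms of those two quantities (submultiplicativity of the Banach-algebra norm). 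I do not expect any serious obstacle here; the real work is deferred to Theorem \ref{cor:11}, where the $\|h\|_{0,M}$ term on the right must be removed.
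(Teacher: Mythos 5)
Your proposal is correct in outline, but it takes a genuinely different route from the paper, and the difference is worth understanding.

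You estimate $\mathscr{L}h = h_{z\bar z} - P_{\bar z}P^{-1}h_z - h_{\bar z}P^{-1}P_z + P_{\bar z}P^{-1}hP^{-1}P_z$ directly as a perturbation of the Laplacian, which requires an $\End^{\self}V$-valued version of the interior and boundary Schauder estimates (constant-coefficient estimate via the scalar Newtonian/Poisson kernels, then absorbing the lower-order operator-coefficient terms by the usual freezing/interpolation argument in the Banach algebra $C^{k,\alpha}(\cdot,\End V)$). That is believable and can be made rigorous, but it is not quite off-the-shelf, and the citation to \cite{MR3738363} is a bit optimistic — that paper does not establish a general Banach-valued Agmon--Douglis--Nirenberg theory; it uses the same device the present paper does. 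The paper instead exploits the flatness of $P$ to avoid Banach-valued Schauder theory entirely: locally, by Lemma~\ref{l,3}, there is a holomorphic $H$ with $H^*PH=1$, and then $(H^*hH)_{z\bar z} = H^*(\mathscr{L}h)H$, so conjugation by $H$ converts $\mathscr{L}$ into the pure Laplacian with \emph{no} lower-order terms. One then applies an arbitrary bounded linear functional $l\in(\End V)^*$ of norm one to reduce to the scalar Laplace equation $[l(H^*hH)]_{z\bar z} = l(H^*(\mathscr{L}h)H)$, invokes the classical Gilbarg--Trudinger Schauder estimate (Lemma 6.5/Cor.~6.7 and Cor.~6.3), and removes $l$ by Hahn--Banach and removes $H$ by the algebraic relations $P^{-1}=HH^*$, $H_z=-P^{-1}P_zH$. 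The trade-off: your route, if one grants Banach-valued Schauder, does not need $R^P=0$ and works for any $C^{2,\alpha}$ metric $P$; the paper's route uses flatness essentially (via $H^*PH=1$) but only needs the completely classical scalar theory. Also note that both the boundary regularity of $H$ (\cite[Theorem 3.7]{MR3738363}) and the control of $\|H\|$ and $\|H_z\|$ via $P$, $P^{-1}$ are what make the constant $C(\|P\|_{2,\alpha},\|P^{-1}\|_{0,\alpha})$ come out; in your version the dependence is read off the coefficients of $\mathscr{L}$ directly, as you correctly observe.
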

The prominent feature of $L$ is the following. On a simply connected open set, we have $H^*PH=1$ with holomorphic $H$ by Lemma \ref{l,3}, and it turns out that $$\frac{1}{2}\Delta(H^*hH)= -H^*(Lh)H.$$
Here $\Delta$ is the Laplace operator with respect to $\omega$, and we use the fact that $\Delta$ when acting on functions is the same as $2\sqrt{-1}\Lambda \bar{\partial}\partial$. Therefore, modulo a gauge transformation $H$, $L$ is the Laplace operator, locally. In a chart, the above equality becomes $(H^*hH)_{z\bar{z}}=H^*(\mathscr{L}h)H$. We will exploit this to reduce Lemma \ref{lem:9} to the corresponding estimates for scalar-valued elliptic partial differential equations. 

If $L$ had nonpositive zero order term, general theory would imply $\|h\|_{0, M}\leq C\|Lh\|_{0, M}$, which together with Lemma \ref{lem:9} would give Theorem \ref{cor:11}. Nonetheless, the zero order term of $L$ has the opposite sign. To get around this problem we first prove a maximum principle, Lemma \ref{lem:12}, and observe that for $u\in C^2(\overline{M},\mathbb{C})$, $$L(u\cdot P)=\sqrt{-1}\Lambda (-\bar{\partial}\partial u\cdot P+uP\cdot R^P)=(-\frac{1}{2}\Delta u)P,$$ as $R^P=0$. A suitable choice of $u$ will put us in the position of using Lemma \ref{lem:12}, and Theorem \ref{cor:11} will follow quickly. 
\begin{proof}[Proof of Lemma \ref{lem:9}]
Consider two finite open covers $\{U_i\},\{V_i\}$ of $\overline{M}$, such that $\overline{U_i},\overline{V_i}$ are in a chart $\phi_i$ for each $i$, and 
\begin{equation*}
\begin{aligned}
\text{for interior chart, }&\begin{cases}
\phi_i(U_i)=B(0,1)\\
\phi_i(V_i)=B(0,2).  
\end{cases}\\
\text{for boundary chart, }
&\begin{cases}
\phi_i(U_i)=B(0,1)\cap \overline{H}\\
\phi_i(V_i)=B(0,2)\cap \overline{H}
\end{cases}
\text{where $H\subset \mathbb{C}$ is the upper-half plane.}
\end{aligned}
\end{equation*}
We use $\{U_i\}$ to define the norm on $C^{2,\alpha}(\overline{M},\End^{\self} V)$ and $\{V_i\}$ on $C^{\alpha}(\overline{M},\End^{\self} V)$. 

Since our arguments will be local, we can assume $U_i,V_i$ are already in $\mathbb{C}$ and $\phi_i$ is the identity. We first consider a boundary chart $\phi_i$. As mentioned above, $(H^*hH)_{z\bar{z}}=H^*(\mathscr{L}h)H$, where $H$ is a holomorphic function in the interior of this chart with $H^*PH=1$. As $P$ is $C^{2,\alpha}$ up to boundary of $\overline{M}$, so is $H$, according to \cite[Theorem 3.7]{MR3738363}. Consider a bounded linear functional $l\in (\End V)^*$ of norm one, and apply $l$ to the equation obtaining $[l(H^*hH)]_{z\bar{z}}=l(H^*(\mathscr{L}h)H)$, a scalar-valued equation. Denote $\phi_i(U_i)=B'$ and $\phi_i(V_i)=B''$. By  \cite[Lemma 6.5 or Corollary 6.7]{MR1814364}
\begin{equation}\label{eq:1}
\begin{aligned}\|l(H^*hH)\|_{2,\alpha, B'}\leq C(\|l(H^*hH)\|_{0,B''}+\|l(H^*(\mathscr{L}h)H)\|_{0,\alpha,B''})
\end{aligned}
\end{equation}
where $C$ is a uniform constant.%only on \alpha
 We can get rid of $l$ and $H$ to have $$\|h\|_{2,\alpha, B'}\leq C(\|h\|_{0,M}+\|Lh\|_{0,\alpha,M}).$$

Indeed, at each point in $B''$, $$|l(H^*hH)|\leq \|H^*hH\|_{\op}\leq \|H\|_{\op}^2\|h\|_{\op}\leq C\|h\|_{\op}.$$ The last inequality follows from $P^{-1}=HH^*$. Similarly,
\begin{align*}
\|l(H^*(\mathscr{L}h)H)\|_{0,\alpha,B''} &\leq \|H^*(\mathscr{L}h)H\|_{0,\alpha,B''}\\
&\leq \|H\|^2_{0,\alpha,B''}\cdot \|\mathscr{L}h\|_{0,\alpha,B''}\\
&\leq C\|H\|^2_{0,\alpha,B''}\cdot \|Lh\|_{0,\alpha,M}\\
&\leq C(\|H\|^2_{0}+\|H_z\|^2_{0})\cdot \|Lh\|_{0,\alpha,M}\\
&\leq C\|Lh\|_{0,\alpha,M}
\end{align*}
The third inequality is by the definition of the $C^{\alpha}$ norm on $M$. The last inequality follows from $H_z=-P^{-1}P_zH$. Therefore, the right hand side of (\ref{eq:1}) is dominated by $C(\|h\|_{0,M}+\|Lh\|_{0,\alpha,M})$. Namely, 
\begin{align*}
C(\|h\|_{0,M}+\|Lh\|_{0,\alpha,M})\geq &\|l(H^*hH)\|_{2,\alpha, B'}
\\
=&\|l(H^*hH)\|_{0}+\|Dl(H^*hH)\|_{0}+\|D^2l(H^*hH)\|_{0,\alpha},
\end{align*}
where $D$ stands for first order and $D^2$ for second order derivatives. Hence, for $x\in B'$, $$|Dl(H^*hH)(x)|\leq C(\|h\|_{0,M}+\|Lh\|_{0,\alpha,M})$$ and by the Hahn--Banach Theorem $$\|D(H^*hH)(x)\|_{\op}\leq C(\|h\|_{0,M}+\|Lh\|_{0,\alpha,M}).$$ As a consequence, 
\begin{align*} C(\|h\|_{0,M}+\|Lh\|_{0,\alpha,M}) &\geq\|DH^*hH+H^*DhH+H^*hDH \|_{0,B'}\\
&\geq \|H^*DhH\|_{0,B'}-\|H^*hDH \|_{0,B'}-\|DH^*hH\|_{0,B'}\\
&\geq \|H^*DhH\|_{0,B'}-C\|h\|_{0,B'}.
\end{align*}
So $$C(\|h\|_{0,M}+\|Lh\|_{0,\alpha,M})\geq \|H^*DhH\|_{0,B'}.$$ Since 
\begin{align*}
\|Dh\|_{0,B'}&\leq \|H^*DhH\|_{0,B'}\|{H}^{-1}\|_{0,B'}^2=\|H^*DhH\|_{0,B'}\|P\|_{0,B'}\leq C\|H^*DhH\|_{0,B'},
\end{align*}
we have $$\|Dh\|_{0, B'}\leq C(\|h\|_{0,M}+\|Lh\|_{0,\alpha,M}).$$ 
We can estimate the second derivatives and their H\"older norms similarly, and obtain
\begin{equation}\label{eq:2}
\|h\|_{2,\alpha, B'}\leq C(\|h\|_{0,M}+\|Lh\|_{0,\alpha,M}).
\end{equation}
 
We next consider an interior chart $\phi_i$. As before $[l(H^*hH)]_{z\bar{z}}=l(H^*(\mathscr{L}h)H)$. We let $\phi_i(U_i)=B'$ and $\phi_i(V_i)=B''$. By \cite[Corollary 6.3]{MR1814364}, 
\begin{equation*}
\begin{aligned}
\|Dl(H^*hH)\|_{0,B'}+\|D^2l(H^*hH)\|_{0,B'}+[D^2l(H^*hH)]_{\alpha,B'}\\
\leq C\big[\|l(H^*hH)\|_{0,B''}+\|l(H^*(\mathscr{L}h)H)\|_{0,\alpha, B''}\big]. %C on \alpha only
\end{aligned}
\end{equation*}
Using the same method as in boundary charts, we can get rid of $l$ and $H$ to obtain the same estimate (\ref{eq:2}). Hence the lemma follows.
\end{proof}

We next prove a maximum principle, which in turn gives rise to $C^0$ estimates. Recall that $\gen{\blk,\blk}$ is the inner product of $V$, and denote $\|v\|^2_{P(z)}= \langle P(z)v,v\rangle$.

\begin{lem}\label{lem:12}
Suppose $h\in C^2({M}, \End^{\self} V)$. Define $$S_{P,h}(z)=\sup_{\|v\|_{P(z)}=1}\langle h(z)v,v\rangle.$$  If $Lh\geq 0$, then $S_{P,h}(z)$ is subharmonic. As a result, if additionally $h$ is continuous on $\overline{M}$, then $$\sup_{\overline{M}}S_{P,h}=\sup_{\partial M}S_{P,h}.$$
\end{lem}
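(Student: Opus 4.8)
The plan is to localise, pass to the gauge in which $L$ becomes the Laplacian, and then recognise $S_{P,h}$ as a continuous supremum of subharmonic functions.

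Since subharmonicity is a local notion, it suffices to check it near an arbitrary point $z_0\in M$. Fix a simply connected coordinate disc $U\subset M$ containing $z_0$; as $P$ is $C^2$ and flat on $U$, Lemma \ref{l,3} furnishes $H\in\mathcal O(U,\End^{\times}V)$ with $H^*PH=1$ on $U$ (take the inverse of the factor the lemma produces). Put $k=H^*hH\in C^2(U,\End^{\self}V)$. The substitution $v=Hw$ carries the unit sphere of $V$ bijectively onto $\{v:\|v\|_{P(z)}=1\}$, because $\langle P(z)Hw,Hw\rangle=\langle H^*PHw,w\rangle=\|w\|^2$, and it sends $\langle h(z)v,v\rangle$ to $\langle k(z)w,w\rangle$; hence on $U$
\begin{equation*}
S_{P,h}(z)=\sup_{\|w\|=1}\langle k(z)w,w\rangle ,
\end{equation*}
i.e.\ $S_{P,h}$ is the top of the spectrum of the self-adjoint operator $k(z)$.

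Next I would combine the identity $(H^*hH)_{z\bar z}=H^*(\mathscr Lh)H$, recorded just before the lemma, with $\mathscr Lh=g\,Lh$ and $g>0$: the hypothesis $Lh\ge 0$ then forces $k_{z\bar z}=H^*(\mathscr Lh)H\ge 0$ as a self-adjoint operator at each point of $U$. Hence for every fixed unit vector $v\in V$ the real-valued $C^2$ function $z\mapsto\langle k(z)v,v\rangle$ satisfies $\partial_z\partial_{\bar z}\langle k(z)v,v\rangle=\langle k_{z\bar z}(z)v,v\rangle\ge 0$, so it is subharmonic on $U$. Therefore $S_{P,h}|_U=\sup_{\|v\|=1}\langle k(\cdot)v,v\rangle$ is a supremum of subharmonic functions; it is moreover continuous, because $A\mapsto\sup_{\|v\|=1}\langle Av,v\rangle$ is $1$-Lipschitz on $\End^{\self}V$ for the operator norm and $k$ is continuous. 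Averaging the sub-mean-value inequality for each $\langle k(\cdot)v,v\rangle$ over a small circle and taking the supremum over $v$ shows that the (continuous, hence upper semicontinuous and locally integrable) function $S_{P,h}$ satisfies the sub-mean-value inequality on all small circles, so it is subharmonic on $U$, and thus on $M$.

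For the final assertion, if $h$ is continuous on $\overline M$ then so is $S_{P,h}$ (using $P\in C(\overline M,\End^+V)$), and it is subharmonic on $M$; the maximum principle for subharmonic functions on the compact Riemann surface $\overline M$ then yields $\sup_{\overline M}S_{P,h}=\sup_{\partial M}S_{P,h}$. The step that needs the most care is the passage from ``each $\langle k(\cdot)v,v\rangle$ is subharmonic'' to ``$S_{P,h}$ is subharmonic'': a bare pointwise supremum of subharmonic functions need not be subharmonic, and it is precisely the continuity of $S_{P,h}$ (equivalently, its spectral description as $\lambda_{\max}(k)$) that repairs this. Everything else is the gauge change of Lemma \ref{l,3} and the classical equivalence between $\partial_z\partial_{\bar z}\ge 0$ and subharmonicity, which is conformally invariant and hence well defined on the Riemann surface.
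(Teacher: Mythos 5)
Your proof is correct and follows essentially the same route as the paper's: localize, gauge by the holomorphic $H$ with $H^*PH=1$ from Lemma~\ref{l,3}, use the identity $(H^*hH)_{z\bar z}=H^*(\mathscr{L}h)H$ to see each $\langle H^*hH\,v,v\rangle$ is subharmonic, rewrite $S_{P,h}$ as $\sup_{\|w\|=1}\langle H^*hH\,w,w\rangle$, and invoke continuity of $S_{P,h}$ to upgrade the supremum of subharmonic functions to a subharmonic function. The only cosmetic differences are that you verify continuity via the $1$-Lipschitz property of $A\mapsto\lambda_{\max}(A)$ while the paper phrases it as equicontinuity of the family $\langle P^{-1/2}hP^{-1/2}u,u\rangle$, and you spell out the sub-mean-value step more explicitly.
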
 
\begin{proof}
First, 
\begin{align*} S_{P,h}(z)&=\sup_{\langle P(z)v,v\rangle=1}\langle h(z)v,v\rangle=\sup_{\|{P(z)}^{1/2}v\|=1}\langle{P(z)}^{-1/2}h(z){P(z)}^{-1/2}{P(z)}^{1/2}v,{P(z)}^{1/2}v\rangle\\
&=\sup_{\|u\|=1}\langle{P(z)}^{-1/2}h(z){P(z)}^{-1/2}u,u\rangle
\end{align*}
is continuous, as the sup of a family of equicontinuous functions. Locally, we have $H^*PH=1$ and $(H^*hH)_{z\bar{z}}=H^*(\mathscr{L}h)H$; furthermore, $0\leq Lh=(1/g)\cdot \mathscr{L}h$ means $\mathscr{L}h\geq 0$. Since $$0\leq \langle(\mathscr{L}h)Hv,Hv\rangle=\langle(H^*hH)_{z\bar{z}}v,v\rangle,$$ $\langle(H^*hH)v,v\rangle$ is subharmonic for any $v\in V$. Thus, \begin{align*}
S_{P,h}(z)=\sup_{\langle P(z)v,v\rangle=1}\langle h(z)v,v\rangle =\sup_{\langle H^{-1}v,H^{-1}v\rangle=1}\langle h(z)v,v\rangle =\sup_{\langle u,u\rangle=1}\langle H^*hH(z)u,u\rangle
\end{align*}
is the sup of a family of subharmonic functions. As we already know $S_{P,h}(z)$ is continuous, it is subharmonic. 
\end{proof}

\begin{thm}\label{thm:10}
If $h\in C^{2,\alpha}(\overline{M},\End^{\self} V)$ and $h|_{\partial M}=0$, then $$\|h\|_{0, M}\leq C\|Lh\|_{0, M}$$ where $C=C(\|P\|_0, \|P^{-1}\|_0)$.
\end{thm}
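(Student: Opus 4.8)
The plan is to reduce the bound to the maximum principle of Lemma \ref{lem:12} by constructing an auxiliary scalar function $u$ whose "barrier" property absorbs the wrong sign of the zero-order term. The key observation, already recorded above, is that for $u\in C^2(\overline M,\mathbb R)$ one has $L(u\cdot P)=(-\tfrac12\Delta u)P$, where $\Delta$ is the Laplacian of $\omega$. So if I pick $u$ with $-\tfrac12\Delta u\ge 0$, then $L(u\cdot P)\ge 0$ in the sense of the ordering on $\End^{\self}V$, and Lemma \ref{lem:12} applies to $u\cdot P\pm h$ whenever $\pm Lh\le -\tfrac12\Delta u\cdot P$ pointwise; concretely I will take a smooth superharmonic $u>0$ on $\overline M$ with $-\tfrac12\Delta u\ge 1$ (such a $u$ exists on the compact surface with boundary $\overline M$ — e.g. solve $\Delta u=-2$ with, say, $u|_{\partial M}=0$, or just take $u$ a large constant plus a fixed superharmonic bump; any fixed choice gives constants $c_1\le u\le c_2$ and $c_1>0$ depending only on $\overline M,\omega$).

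Next I set $N=\|Lh\|_{0,M}$ and consider $g_\pm = N\,u\cdot P \pm h$. Since $P\ge 0$ and $-Lh\ge -N\cdot 1\ge -N\|P^{-1}\|_0^{-1}P$ — more precisely $\pm Lh\le \|Lh\|_{0}\cdot\mathrm{id}\le N\|P^{-1}\|_{0}\,P$ using $\mathrm{id}\le \|P^{-1}\|_0\,P$ — I get, after replacing $N$ by $N\|P^{-1}\|_0$ if needed, that $L g_\pm = N(-\tfrac12\Delta u)P \pm Lh \ge N\,P \pm Lh\ge 0$. Thus $g_\pm$ satisfies the hypothesis of Lemma \ref{lem:12}, so $S_{P,g_\pm}$ attains its maximum over $\overline M$ on $\partial M$. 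On $\partial M$ we have $h=0$, so $g_\pm|_{\partial M}=N\,u|_{\partial M}\cdot P$, whence $S_{P,g_\pm}|_{\partial M}=N\,u|_{\partial M}\le N c_2$ (here I use that $S_{P,cP}(z)=c$ for a scalar $c$, since $\langle cP v,v\rangle=c$ when $\|v\|_{P(z)}=1$). Therefore $S_{P,g_+}\le N c_2$ on all of $\overline M$, i.e. $\langle h(z)v,v\rangle \le N c_2 - N\,u(z)\langle P(z)v,v\rangle\le N c_2$ for all unit vectors $v$ in the $P(z)$-norm; and applying the same to $g_-$ gives $\langle -h(z)v,v\rangle\le N c_2$. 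Hence $|\langle h(z)v,v\rangle|\le N c_2$ whenever $\|v\|_{P(z)}=1$, and since $h$ is self-adjoint this controls $\|P(z)^{-1/2}h(z)P(z)^{-1/2}\|_{\op}\le N c_2$, from which $\|h(z)\|_{\op}\le N c_2\|P(z)\|_{\op}\le c_2\|P\|_0\|P^{-1}\|_0\,\|Lh\|_{0,M}$.

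The only genuine point requiring care is the existence of the barrier $u$: I need a fixed smooth positive superharmonic function on the compact surface-with-boundary $\overline M$ with $-\tfrac12\Delta u$ bounded below by a positive constant, which is immediate by solving the Dirichlet problem $\Delta u=-2$, $u|_{\partial M}=1$ (classical, scalar, and the solution is smooth and positive by the maximum principle); the resulting constants depend only on $\overline M$ and $\omega$, hence may be absorbed into $C$. Everything else is the bookkeeping of comparing operators via $\mathrm{id}\le \|P^{-1}\|_0 P$ and $\|h\|_{\op}\le \|P\|_0\,\|P^{-1/2}hP^{-1/2}\|_{\op}$, together with the already-proved Lemma \ref{lem:12}. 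I do not expect any obstacle beyond this.
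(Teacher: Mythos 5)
Your proposal follows the paper's strategy exactly: use the identity $L(u\cdot P) = (\text{const})\cdot\Delta u\cdot P$ to build a scalar barrier, apply Lemma~\ref{lem:12}, and convert the resulting bound on $S_{P,\cdot}$ into an operator-norm bound on $h$ via $\mathrm{id}\le\|P^{-1}\|_0P$ and $\|h\|_{\op}\le\|P\|_0\|P^{-1/2}hP^{-1/2}\|_{\op}$. The only real difference is bookkeeping: the paper considers $h-F$ with $F$ a barrier whose boundary trace is absorbed by a shift ($\Phi-\inf\Phi$), while you consider $g_\pm = N'uP\pm h$ with $u|_{\partial M}=1$. These are equivalent reformulations.

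One point you should watch: the identity in the paper, $L(u\cdot P)=(-\tfrac12\Delta u)P$, which you take as given, has the wrong sign. If you test it with $\dim V=1$, $P\equiv 1$, so $A=0$, then on one hand $L(u)=\sqrt{-1}\Lambda\bar\partial\partial u=\tfrac12\Delta u$, while the displayed identity would say $L(u)=-\tfrac12\Delta u$; a direct computation with the local form $\mathscr{L}h$ gives $\mathscr{L}(uP)=u_{z\bar z}P$ when $R^P=0$, so the correct identity is $L(u\cdot P)=(+\tfrac12\Delta u)P$. Consequently your $u$ should be \emph{sub}harmonic (say $\Delta u=2$, $u|_{\partial M}=1$, so $u\le 1$), not superharmonic; otherwise $Lg_\pm\le 0$ rather than $\ge 0$ and Lemma~\ref{lem:12} does not apply to $g_\pm$. (In fact, if both your asserted $Lg_\pm\ge 0$ and the strong form of the maximum principle held with a strictly superharmonic $u$, you would derive $\pm h \le N'(1-u)P \le 0$ and hence $h\equiv 0$, an absurdity.) The fix is purely a sign flip and does not change the structure of the argument, but as written your choice of barrier is on the wrong side; the same sign slip is present in the paper's own proof, where $\Delta\Phi=2$ should be $\Delta\Phi=-2$.
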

\begin{proof}
Recall if $u\in C^2(\overline{M},\mathbb{C})$, then $$L(u\cdot P)=(-\frac{1}{2}\Delta u)P.$$
Let $\Phi$ be the function vanishing on $\partial M$ such that $\Delta \Phi =2$, and let $G=(\Phi-\inf \Phi)\|P^{-1}\|_0P$. Then $G\geq 0$ with $L(G)=-\|P^{-1}\|_0P\leq -1$. Besides, $G\leq C$, where $C$ depends on $\|P\|_0$ and $\|P^{-1}\|_0$. With $F=G\cdot \|Lh\|_{0}$, we have $h\leq F$ on $\partial M$. Moreover,
\begin{align*}
L (h-F)&=Lh-\|Lh\|_{0}\cdot LG\geq Lh+\|Lh\|_{0}\geq 0.
\end{align*}
By Lemma \ref{lem:12}, $h-F\leq 0$ on $M$. Therefore, $$h\leq G\cdot \|Lh\|_{0}\leq C\|Lh\|_{0}.$$ Replacing $h$ by $-h$, the theorem follows.
\end{proof}
Theorem \ref{cor:11} is a consequence of Lemma \ref{lem:9} and Theorem \ref{thm:10}.
\section{Proof of Theorem \ref{thm:4}}\label{sec:pf}
We start with a regularity result.
\begin{lem}\label{lem R}
Let $P\in C(\overline{M}, \End^+V)\cap C^2(M,\End^+V)$ be flat. If $P|_{\partial M}$ is $C^{k,\alpha}$, $C^{\infty}$, or $C^{\omega}$, then $P$ has the corresponding regularity on $\overline{M}$.
\end{lem}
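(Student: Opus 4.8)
The plan is to localize near $\partial M$ and use the local factorization of Lemma~\ref{l,3}. Interior regularity is automatic: on a simply connected coordinate disc $U\subset M$ one has $P=H^{*}H$ with $H\in\mathcal{O}(U,\End^{\times}V)$, so $P$ is already real analytic on $M$, and only regularity up to $\partial M$ is at issue. Fix a boundary chart identifying a neighborhood of a point of $\partial M$ with the half disc $D^{+}=\{\,|z|<1,\ \Im z>0\,\}$, so that $\partial M$ corresponds to $I=(-1,1)$ and $P|_{I}=F$; since the target regularities are local, it suffices to prove them near $0\in I$. On $D^{+}$, which is simply connected, write $P=H^{*}H$ with $H\in\mathcal{O}(D^{+},\End^{\times}V)$; as $\overline{D^{+}}$ is compact and $P$ is continuous with values in $\End^{+}V$, we have $\|P\|_{0},\|P^{-1}\|_{0}<\infty$, whence $\|H\|$ and $\|H^{-1}\|$ are bounded on $D^{+}$ because $\|H\|^{2}=\|H^{*}H\|$ and $\|H^{-1}\|^{2}=\|(H^{*}H)^{-1}\|$.

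For $F\in C^{k,\alpha}(\partial M)$ (hence also for $F\in C^{\infty}$, since $C^{\infty}=\bigcap_{k}C^{k,\alpha}$) I would show that $H$ extends to a map in $C^{k,\alpha}(\overline{D^{+}},\End^{\times}V)$ near $0$; then $P=H^{*}H$ inherits the same regularity up to $I$. Extend $F^{-1/2}$ from $I$ to $\Phi\in C^{k,\alpha}(\overline{D^{+}},\End^{+}V)$ (possible near $I$, since square root and inverse are smooth on $\End^{+}V$), and put $K=\Phi H$, which is $C^{k,\alpha}$-equivalent to $H$ via the invertible $C^{k,\alpha}$ factor $\Phi^{\pm1}$. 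Since $H$ is holomorphic, $K$ satisfies the first order elliptic equation $\bar{\partial}K=(\bar{\partial}\Phi)\Phi^{-1}K$, and on $I$ one computes $K^{*}K=H^{*}F^{-1}H=H^{*}(H^{*}H)^{-1}H=1$, so $K$ has boundary values along $I$ lying in the unitary group, a smooth submanifold of $\End^{\times}V$. This is an elliptic boundary value problem with boundary values constrained to a smooth submanifold, and, once started, its Schauder theory bootstraps: knowing $K\in C^{1,\alpha}(\overline{D^{+}})$ near $0$, the equation together with the boundary constraint raises the regularity of $K$ step by step up to $C^{k,\alpha}$. (Alternatively, for the finite and $C^{\infty}$ cases one can invoke the boundary regularity analysis of \cite{MR3738363}.) The main obstacle is getting the bootstrap started — passing from ``$H$ merely bounded holomorphic and $P$ merely continuous'' to a first H\"older estimate for $H$ up to $I$ — which is exactly where H\"older (rather than merely continuous) regularity of $F$ is used; I would obtain it from interior $\bar{\partial}$-regularity for $K$ combined with a reflection across $I$ exploiting the constraint $K^{*}K=1$.

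The real analytic case then follows by a reflection argument. By the previous case, $F\in C^{\omega}$ forces $P\in C^{\infty}(\overline{M})$, and hence, by the ``forward'' direction used earlier in this paper ($P$ regular $\Rightarrow$ the local factor is regular), $H$ extends to a map in $C^{\infty}(\overline{D^{+}},\End^{\times}V)$. Since $F$ is real analytic and $\End^{+}V$-valued on $I$, it extends holomorphically to $\mathbf{F}\in\mathcal{O}(D,\End^{\times}V)$ on a full disc $D\supset I$, with $\mathbf{F}(\bar z)^{*}=\mathbf{F}(z)$ by the identity theorem. For $\Im w<0$ set $\widehat{H}(w)=(H(\bar w)^{*})^{-1}\mathbf{F}(w)$; since $w\mapsto H(\bar w)^{*}$ is holomorphic in $w$ with invertible values, $\widehat{H}$ is holomorphic on $\{\Im w<0\}$ near $I$, and on $I$ it satisfies $\widehat{H}(x)=(H(x)^{*})^{-1}\mathbf{F}(x)=(H(x)^{*})^{-1}H(x)^{*}H(x)=H(x)$, the agreement being $C^{\infty}$. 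By Morera's theorem, $H$ and $\widehat{H}$ glue to a holomorphic map $\mathbf{H}\in\mathcal{O}(D',\End^{\times}V)$ on a full disc $D'\ni 0$, so $\mathbf{H}^{*}\mathbf{H}$ is real analytic on $D'$ and coincides with $P$ on $D'\cap D^{+}$. Thus $P$ is real analytic up to $I$; covering $\partial M$ by finitely many such charts gives $P\in C^{\omega}(\overline{M})$. (The same computation in fact extends the flat metric $P$ flatly across the real analytic boundary, which is the geometric substance of the statement.)
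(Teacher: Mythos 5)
Your plan decomposes into interior regularity, the $C^{k,\alpha}$ and $C^{\infty}$ cases, and the $C^{\omega}$ case, exactly as the paper does, and the interior step and the $C^{\omega}$ reflection are essentially the paper's argument: both use the local factorization $P=H^{*}H$ on a boundary half-disc, extend the real-analytic boundary data $F$ holomorphically to a neighborhood of $I$, and reflect via $\widehat{H}(w)=(H(\bar w)^{*})^{-1}\mathbf{F}(w)$ to continue $H$ holomorphically across the real axis, whence $P=H^{*}H$ is real analytic up to the boundary. (One small difference: you first obtain $H\in C^{\infty}$ from the previous case, whereas the paper only needs $H$ continuous up to $I$, which it takes directly from \cite[Theorem 3.7]{MR3738363}; Morera/Painlev\'e gluing requires only continuity on the glue line, so both work.)

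The genuine gap is in your $C^{k,\alpha}$ step, and you half-acknowledge it. You set $K=\Phi H$, obtain a first-order equation $\bar{\partial}K=(\bar{\partial}\Phi)\Phi^{-1}K$ with boundary constraint $K^{*}K=1$ on $I$, and propose a Schauder bootstrap. But (i) the bootstrap needs a starting point, and ``$H$ bounded holomorphic, $P$ continuous'' does not supply one --- the announced plan (interior $\bar{\partial}$-regularity plus reflection exploiting $K^{*}K=1$) is never carried out; (ii) $K^{*}K=1$ is a nonlinear constraint (boundary values in the unitary group), not a linear Lopatinski-type boundary condition, so this is not an off-the-shelf elliptic boundary value problem; and (iii) the whole reason for this paper's approach is that in infinite dimensions the compactness and Fredholm machinery underlying standard elliptic boundary regularity is unavailable, so ``its Schauder theory bootstraps'' is precisely the nontrivial claim, not a routine step. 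What you would end up re-proving is essentially \cite[Theorem 3.7]{MR3738363}, which asserts exactly that the local holomorphic factor $H$ of a flat $P$ inherits $C^{0}$, $C^{k,\alpha}$, and hence $C^{\infty}$ regularity up to the boundary from $P|_{\partial M}$. The paper's proof of this lemma simply invokes that theorem on each boundary chart and then squares $H$. You do list this as an ``alternative''; if that is what you intend, your proof coincides with the paper's, but as foregrounded, your $C^{k,\alpha}$ argument is a sketch with an unclosed hole rather than a proof.
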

\begin{proof}
By Lemma \ref{l,3}, $P=H^*H$ with a holomorphic map $H$ locally, so $P$ is always $C^{\omega}$ in $M$ regardless of its boundary values. Denote $P|_{\partial M}$ by $F$. Suppose $F\in C^{k,\alpha}$, we have on a boundary chart $P=H^*H$, and $H$ is $C^{k,\alpha}$ up to $\partial M$ by \cite[Theorem 3.7]{MR3738363}; therefore, $P$ is $C^{k,\alpha}$ up to $\partial M$. Next suppose $F$ is $C^{\infty}$, then by the $C^{k,\alpha}$ result, $P$ is $C^k$ up to $\partial M$ for any positive integer $k$, hence $C^{\infty}$.

Finally, suppose $F\in C^{\omega}$. On a boundary chart, that we identify with the upper-half disc in $\mathbb{C}$, $P=H^*H$ with $H$ continuous up to the real axis by \cite[Theorem 3.7]{MR3738363}. Since $F\in C^{\omega}$, it has a holomorphic extension in a neighborhood of the real axis in the disc, so the map ${H^*}^{-1}(\bar{z})\cdot F(z)$ provides $H$ a holomorphic extension across the real axis, it follows that $P$ is real analytic across the real axis.
\end{proof}
\begin{proof}[Proof of Theorem \ref{thm:4}]
The uniqueness follows from the maximum principle (see \cite[Lemma 3.2]{MR3738363} or \cite{MR3314125}). We consider first the case $F\in C^{\omega}$ and prove the existence by the continuity method. Fix $0<\alpha<1$, let $\phi_{t}=tF+(1-t)\textrm{Id}$, and 
$$T=\left\{ t\in[0,1] \:\middle|\: 
\begin{gathered}
\text{ If } 0\leq s\leq t,
\text{ then } \phi_s= P_s|_{\partial M}, \\
\text{ for some } P_s\in C^{2,\alpha}(\overline{M},\End^+V), \text{ and } R^{P_s}=0 
\end{gathered}
\right\}.$$ 
We will say those $\phi_s$ ``have an extension.'' The goal is to show $T=[0,1]$. If so, $\phi_{1}=F$ has a $C^{2,\alpha}$ extension, and we can improve the regularity from $C^{2,\alpha}$ to $C^{\omega}$ by Lemma \ref{lem R}. Because 0 is in $T$, $T$ is nonempty. First we prove $T$ is closed. 

Suppose $T \ni t_j\to t_0$. For $s<t_0$, we can find $t_j>s$, therefore $\phi_s$ has an extension. We have to show $\phi_{t_0}$ extends. For brevity, we write $P_j$ instead of $P_{t_j}$. Since $P_j|_{\partial M }=\phi_{t_j} \to \phi_{t_0}$, $P_j$ converges by Lemma \ref{lem:2}, say to $P_{\infty}\in C(\overline{M}, \End^+V)$, and $P_{\infty}|_{\partial M}=\phi_{t_0}$. For any interior point of $\overline M$, choose a chart with image the unit disc $D$ in $\mathbb{C}$. Thus, $P_j=H^*_jH_j$, where $H_j\in \mathcal{O}(D, \End^{\times}V)$ by Lemma \ref{l,3}, and after multiplying with the unitary operator $(H^*_j(0)H_j(0))^{1/2}H^{-1}_j(0)$ we can assume $H_j(0)\in \End^+V$. By Lemma \ref{lem:3}, there exists $H$ holomorphic on $D$ such that $H_j\to H$ locally uniformly. Hence, $P_{\infty}=\lim H_j^*H_j = H^*H$ on $D$ which implies $P_{\infty}\in C^{\infty}(M,\End^+V)$ and $R^{P_{\infty}}=0$. By Lemma \ref{lem R}, $P_{\infty}$ is $C^{\omega}$, especially $C^{2,\alpha}$ on $\overline{M}$. Hence, $t_0$ is in $T$ and $T$ is closed.

Now we prove that $T$ is open. If $t_0\in T$ then $\phi_t$ has an extension $P_t$, for $0\leq t\leq t_0$. Consider the smooth map 
\begin{align*}
\Psi:C^{2,\alpha}(\overline{M},\End^+V)&\to C^{\alpha}(\overline{M}, \End^{\self} V)\times C^{2,\alpha}(\partial M,\End^+V)\\ 
h&\mapsto (\sqrt{-1}\Lambda (h\bar{\partial}(h^{-1}\partial h)),h|_{\partial M}).
\end{align*}
Then $\Psi(P_{t_0})=(0,\phi_{t_0} )$. We denote $P^{-1}_{t}\partial P_{t}=A_{t}$, so the linearization of $\Psi$ at $P_{t_0}$ is 
\begin{align*}
C^{2,\alpha}(\overline{M},\End^{\self} V)&\to C^{\alpha}(\overline{M}, \End^{\self} V)\times C^{2,\alpha}(\partial M,\End^{\self} V)\\
h&\mapsto (\sqrt{-1}\Lambda(\bar{\partial}\partial h-A^*_{t_0}\wedge \partial h-\bar{\partial}h\wedge A_{t_0}+A^*_{t_0}\wedge h\wedge A_{t_0}),h|_{\partial M}).
\end{align*}
It is here the operator in section \ref{est} turns up. We will show that the linearization is an isomorphism. Then $\Psi$ is a diffeomorphism in a neighborhood of $P_{t_0}$ by the implicit function theorem, and that implies $T$ is open. 

To show that the linearization is an isomorphism, it suffices to prove it is bijective because of the Open Mapping Theorem. That is, given $$(f_1,f_2)\in C^{\alpha}(\overline{M}, \End^{\self} V)\times C^{2,\alpha}(\partial M,\End^{\self} V),$$ the equation 
\begin{equation}\label{eq4.1}
\begin{cases}
\sqrt{-1}\Lambda(\bar{\partial}\partial h-A^*_{t_0}\partial h-\bar{\partial}h A_{t_0}+A^*_{t_0}hA_{t_0})=f_1\\ h|_{\partial M}=f_2

\end{cases}
\end{equation}
has a unique solution. That there is at most one solution easily follows from the maximum principle, Lemma \ref{lem:12} or Theorem \ref{cor:11}. If $\dim V <\infty$, existence follows from uniqueness by Fredholm alternative. However, if $\dim V=\infty$, Fredholm alternative is not available, because the embedding $C^{2,\alpha}(\overline{M},\End V)\rightarrow C^{\alpha}(\overline{M},\End V)$ is no longer compact. The way we solve (\ref{eq4.1}) is again the continuity method, based on the next lemma:
\begin{lem}\label{l,14}
Let $B,V$ be two Banach spaces, and $\{L_t\}_{0\leq t\leq 1}$ a family of bounded linear operators from $B$ to $V$. Suppose $t\mapsto L_t$ is continuous in operator norm; moreover, there exists a constant $C$ such that $\|x\|\leq C\|L_tx\|$ for any $x\in B$ and any $t$. Then $L_1$ is onto if and only if $L_0$ is onto.\end{lem}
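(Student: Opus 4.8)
The plan is to prove Lemma \ref{l,14} by the classical continuity (openness–closedness) method applied to the parameter interval $[0,1]$. Introduce the set $S=\{t\in[0,1] : L_t \text{ is onto}\}$. By hypothesis either $0\in S$ or we want to show $S=[0,1]$ once we know one endpoint lies in $S$; more precisely, by symmetry it suffices to show that $S$ is both open and closed in $[0,1]$, so that if it is nonempty it is all of $[0,1]$. The key structural observation is that the uniform lower bound $\|x\|\le C\|L_t x\|$ forces each $L_t$ to be \emph{injective} with closed range; hence $L_t$ onto is equivalent to $L_t$ being a bijection, and when that holds $\|L_t^{-1}\|\le C$ uniformly in $t$.

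First I would record the closedness of the range: if $x_n\in B$ and $L_t x_n\to y$, then $\|x_n-x_m\|\le C\|L_t x_n - L_t x_m\|$ shows $(x_n)$ is Cauchy, so $x_n\to x$ and $L_t x=y$; thus $\operatorname{ran}L_t$ is closed. Next, for openness of $S$: suppose $t_0\in S$, so $L_{t_0}$ is invertible with $\|L_{t_0}^{-1}\|\le C$. For $t$ near $t_0$ write $L_t = L_{t_0}(\mathrm{Id} + L_{t_0}^{-1}(L_t-L_{t_0}))$; since $t\mapsto L_t$ is norm-continuous, $\|L_{t_0}^{-1}(L_t-L_{t_0})\|\le C\|L_t-L_{t_0}\|<1$ for $|t-t_0|$ small, so the Neumann series gives invertibility of $L_t$, hence $t\in S$. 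For closedness of $S$ in $[0,1]$: let $S\ni t_j\to t_0$. Using the uniform bound, $L_{t_0}$ is injective with closed range, so I only need surjectivity. Given $y\in V$, pick $j$ with $\|L_{t_j}-L_{t_0}\|$ small; then by the openness argument just given (applied with base point $t_j\in S$) $L_{t_0}$ is invertible — indeed $t_j\in S$ means $t_0$ lies in the open neighborhood of $t_j$ on which all operators are onto. So $t_0\in S$ and $S$ is closed.

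Putting it together: $S$ is open and closed in the connected set $[0,1]$. If $L_0$ is onto then $0\in S$, so $S=[0,1]$ and in particular $L_1$ is onto; the converse follows by running the same argument with the roles of $0$ and $1$ interchanged (or by reparametrizing $t\mapsto 1-t$, which preserves norm-continuity of the family and the uniform bound). This establishes the ``if and only if.''

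I do not expect a genuine obstacle here — the argument is the standard method-of-continuity template, and the only thing that makes it work cleanly is the \emph{uniform} constant $C$, independent of $t$, which is exactly what is hypothesized. The one point to be careful about is that without surjectivity one still gets injectivity and closed range from $\|x\|\le C\|L_tx\|$, but not invertibility; so the logical flow must be: uniform bound $\Rightarrow$ (injective + closed range) for every $t$, then the openness step upgrades ``onto at one point'' to ``onto, with inverse norm $\le C$, on a neighborhood,'' and these two facts combine to give both openness and closedness of $S$. No compactness, Fredholm theory, or special structure of $B,V$ is needed.
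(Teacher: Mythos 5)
Your proof is correct, and it is essentially the same method-of-continuity argument that underlies \cite[Theorem 5.2]{MR1814364}, which the paper cites and declines to reproduce (the only change from that reference is replacing the affine path $(1-t)L_0+tL_1$ by a general norm-continuous path, which you handle correctly via uniform continuity on $[0,1]$). Your open-plus-closed framing and the chain-of-intervals argument in Gilbarg--Trudinger are interchangeable here precisely because the uniform bound $\|x\|\le C\|L_tx\|$ gives a $t$-independent perturbation radius $1/C$ in operator norm, which is the point you correctly emphasize.
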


This is a variant of \cite[Theorem 5.2]{MR1814364}. The proof is almost the same, so we skip it. Unsurprisingly, we are going to deform our equation to the Laplace equation. The naive way of deforming is by convex combination, but this breaks the symmetry of our equation (after all we want to use the a priori estimates from Theorem \ref{cor:11}). It is here the solution set $T$ plays its role; it tells us how to deform. 

First in equation (\ref{eq4.1}), $f_2$ can be extended to $C^{2,\alpha}(\overline{M},\End^{\self} V)$. If we subtract $f_2$ from $h$, we only need to consider the case of zero boundary value. In other words, we have to show that 
\begin{equation}\label{eq4.2}
L_t :\{h\in C^{2,\alpha}(\overline{M},\End^{\self} V):h|_{\partial M}=0\}\to C^{\alpha}(\overline{M}, \End^{\self} V)$$ $$h\mapsto \sqrt{-1}\Lambda(\bar{\partial}\partial h-A^*_{t}\wedge \partial h-\bar{\partial}h\wedge A_{t}+A^*_{t}\wedge h\wedge A_{t}) 
\end{equation}
is surjective when $t=t_0$. Note that $L_0$ is the Laplace operator, for $P_0=1$. We start with the following lemma, which is stronger than what we need.
\begin{lem}\label{15}
Let $k$ be a nonnegative integer. If $t,s\in [0,t_0]$ and $t\to s$, then $\|P_t-P_s\|_{C^k}\to 0$, and $\|{P_t}^{-1}-{P_s}^{-1}\|_{C^k}\to 0$.
\end{lem}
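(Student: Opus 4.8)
The plan is to prove continuous dependence of the extensions $P_t$ on the parameter $t$, in the $C^k$ topology, for every $k$. Since we already know $\phi_t = tF + (1-t)\mathrm{Id}$ depends real-analytically (in particular, $C^\infty$ and certainly $C^k$-continuously) on $t$ as a family of boundary data, the content of the lemma is that the solution operator $\phi_t \mapsto P_t$ is continuous from $C^k(\partial M)$-valued paths to $C^k(\overline M)$-valued paths. Note that Lemma \ref{lem:2} already gives exactly the $k=0$ statement: if $t\to s$ then $\phi_t\to\phi_s$ in $C(\partial M,\End^+ V)$, hence $P_t\to P_s$ in $C(\overline M,\End^+ V)$; and since $\End^+V$ is open in $\End^{\self}V$ with the inversion map continuous, this also yields $P_t^{-1}\to P_s^{-1}$ in $C^0$. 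So the task is really to bootstrap from $C^0$ to $C^k$.

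**The main step** is a local factorization argument. Fix $s$, and suppose $t_j\to s$. I would cover $\overline M$ by the finite collection of boundary and interior charts $\{U_i\},\{V_i\}$ as in the proof of Lemma \ref{lem:9}. On an interior chart with image the unit disc $D$, Lemma \ref{l,3} gives $P_{t_j} = H_{t_j}^* H_{t_j}$ with $H_{t_j}\in\mathcal O(D,\End^\times V)$, normalized so that $H_{t_j}(0)\in\End^+V$, and similarly $P_s = H_s^*H_s$; by Lemma \ref{lem:3}, since $P_{t_j}\to P_s$ locally uniformly (from the $C^0$ statement), $H_{t_j}\to H_s$ locally uniformly on $D$, hence in every $C^k_{\mathrm{loc}}$ by Cauchy estimates. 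Therefore $P_{t_j}=H_{t_j}^*H_{t_j}\to H_s^*H_s=P_s$ in $C^k$ on compact subsets of the chart interior, and likewise for the inverses $P_{t_j}^{-1}=H_{t_j}H_{t_j}^*$. The boundary charts are handled the same way, except one invokes \cite[Theorem 3.7]{MR3738363} to get the $C^{k,\alpha}$-regularity of $H_{t_j}$ up to $\partial M$, together with the fact that the boundary values $\phi_{t_j}$ converge in $C^{k,\alpha}(\partial M)$; one needs the convergence $H_{t_j}\to H_s$ to be uniform up to the boundary, which should follow by combining the interior local-uniform convergence with the boundary estimate for $H$ in terms of $P$ and $\phi$. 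Summing the local estimates over the finite cover gives $\|P_{t_j}-P_s\|_{C^k}\to 0$ and $\|P_{t_j}^{-1}-P_s^{-1}\|_{C^k}\to 0$.

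**The expected obstacle** is making the up-to-the-boundary convergence of the local holomorphic factors $H_{t_j}$ rigorous and uniform. The interior convergence from Lemma \ref{lem:3} is local-uniform, which does not directly control behavior near $\partial M$; one needs the quantitative boundary regularity from \cite[Theorem 3.7]{MR3738363} — which presumably bounds $\|H\|_{k,\alpha}$ near $\partial M$ in terms of $\|P\|_{k,\alpha}$ near $\partial M$ and the boundary data — to be uniform in $j$. This requires first establishing uniform $C^{k,\alpha}(\overline M)$ bounds on the family $\{P_{t_j}\}$, which itself can be obtained from the uniform boundary bounds (the $\phi_{t_j}$ lie in a bounded set of $C^{k,\alpha}(\partial M)$, being an affine path) via the same Schauder-type machinery. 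Once uniform bounds are in hand, Arzelà–Ascoli gives $C^k$-precompactness of $\{P_{t_j}\}$, and the $C^0$-limit identification pins down the limit, giving full $C^k$ convergence. The rest is routine bookkeeping with the finite cover and the submultiplicative Banach-algebra norms.
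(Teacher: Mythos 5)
Your interior-chart argument coincides with the paper's: factor $P_{t}=H_t^*H_t$ via Lemma \ref{l,3}, normalize $H_t(0)\in\End^+V$, use Lemma \ref{lem:3} together with the $C^0$ convergence from Lemma \ref{lem:2} to get $H_t\to H_s$ locally uniformly, and then Cauchy estimates upgrade this to $C^k_{\mathrm{loc}}$, hence $(P_t)_z\to(P_s)_z$ and similarly for higher derivatives.

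Your treatment of the boundary charts, however, has a genuine gap. You propose to get uniform $C^{k,\alpha}(\overline M)$ bounds on $\{P_{t_j}\}$ and then invoke Arzel\`a--Ascoli to extract $C^k$-precompactness, with the $C^0$ limit identifying the limit. But when $V$ is infinite-dimensional, the embedding $C^{k,\alpha}(\overline M,\End V)\hookrightarrow C^k(\overline M,\End V)$ is \emph{not} compact --- the target $\End V$ lacks local compactness, so equicontinuity and boundedness of the family do not yield pointwise precompactness. The paper itself flags exactly this obstruction (it is the reason the Fredholm alternative is unavailable in equation \eqref{eq4.1}), so a compactness argument is precisely what one cannot use here.

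What the paper does instead, and what your argument is missing, is to exploit the fact that Lemma \ref{15} lives inside the $C^\omega$ branch of the continuity method: the boundary data $\phi_t=tF+(1-t)\mathrm{Id}$ is real analytic. In a boundary chart (identified with an upper-half disc), the relation $H_t^*(\bar z)\,\phi_t(z)=H_t^{-1}(z)$ on the real axis, combined with the holomorphic extension of $\phi_t$ across the axis, allows one to analytically continue $H_t$ to a full two-sided neighborhood of the real axis via $z\mapsto (H_t^*)^{-1}(\bar z)\cdot\phi_t(z)$. Once $H_t$ is holomorphic on a neighborhood of any compact set up to (and past) the boundary, one surrounds that set by a contour, uses the uniform bound on $\|H_t\|$ there, and applies Cauchy's Integral Formula together with the Bounded Convergence Theorem to upgrade the pointwise/local-uniform convergence of $H_t\to H_s$ to uniform convergence of $H_t$ and all its derivatives on the compact set. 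This gives $C^k$ convergence of $P_t=H_t^*H_t$ up to the boundary with no compactness of the coefficient space needed, and the Banach-algebra structure of $C^k(\overline M,\End V)$ then yields $P_t^{-1}\to P_s^{-1}$ in $C^k$. You should replace your Arzel\`a--Ascoli step with this reflection/analytic-continuation argument, which is where the real analyticity of $\phi_t$ earns its keep.
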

\begin{proof} 
By Lemma \ref{lem R}, $P_t\in C^k(\overline{M}, \End^+V)$.
Since $P_t|_{\partial M}=\phi_t\to \phi_s$, $P_t$ converges to $P_s$ in $C(\overline{M},\End^+V)$ by Lemma \ref{lem:2}. For the derivatives, we do estimates on charts and consider $\partial_z$ only, as $\partial_{\bar{z}}$ can be done in the same way. On an interior chart, $P_t=H^*_tH_t, P_s=H^*H$ where $H_t, H$ are holomorphic. As in the proof of closedness, $H_t\to H$ locally uniformly, and so do all their derivatives. Therefore, $$(P_t)_{z}=H^*_t(H_t)_{z} \to H^*H_z=(P_s)_{z}$$ locally uniformly. On a boundary chart, that again we identify with the upper-half disc in $\mathbb{C}$, we similarly have $(P_t)_{z}\to (P_s)_{z}$ locally uniformly but only away from the boundary. The convergence near the boundary can be resolved as follows. $P_t=H^*_tH_t$ with $H_t$ continuous up to boundary of $\overline{M}$ (in the current situation, this means the real axis in the disc) by \cite[Theorem 3.7]{MR3738363}. Similarly, $P_s=H^*H$ with $H$ contiunous up to boundary of $\overline{M}$. As in the proof of Lemma \ref{lem R}, since $\phi_t$ is $C^{\omega}$, it has a holomorphic extension in a neighborhood of the real axis in the disc, the map $${H^*_t}^{-1}(\bar{z})\cdot \phi_t(z)$$ provides an analytic continuation of $H_t$ across the real axis that we continue denoting $H_t$. For a compact set in the disc, consider a contour around it. By Cauchy's Integral Formula and the fact $\|H_t\|$ has a uniform upper bound, the Bounded Convergence Theorem implies that $H_t$ converges to $H$ uniformly on this compact set, and the same holds for derivatives of all orders. Hence, $P_t\to P_s$ in $C^k$ for any nonnegative integer $k$, locally uniformly in this boundary chart. Therefore, we conclude the $C^k$ convergence on $\overline{M}$. Since $C^k(\overline{M},\End V)$ is a Banach algebra, ${P_t}^{-1}\to {P_s}^{-1}$ in $C^k$.
\end{proof}
This lemma implies that $\|L_t-L_s\|\to 0$ as $t\to s$, where the norm on $L_t$ is the operator norm from (4.2). From Theorem \ref{cor:11} and the continuity Lemma \ref{15}, we get the desired estimates: if $h\in C^{2,\alpha}(\overline{M},\End^{\self} V)$ and $h|_{\partial M}=0$, then $$\|h\|_{2,\alpha, M}\leq C\|L_th\|_{0,\alpha, M}$$ where $C$ is independent of $t$. Therefore, by Lemma \ref{l,14} and the fact $L_0=\Delta /2$ is onto, $L_{t_0}$ is also onto, which implies the equation (\ref{eq4.1}) is uniquely solvable, so $T$ is open and therefore $T=[0,1]$. This completes the proof of Theorem \ref{thm:4} for $C^{\omega}$ case.

If the boundary data $F$ is only $C^0$, it can be approximated by a sequence $F_j\in C^{\omega}(\partial M, \End^+ V)$ in sup norm, for the following reason: $\partial M$ as a real analytic manifold can be real analytically embedded in some $\mathbb{R}^N$ by an embedding theorem of Grauert and Morrey \cite{MR0098847} \cite{10.2307/1970048}; $F$ has a continuous extension to $\mathbb{R}^N$, which can be approximated by polynomials $P_j$; after composing $P_j$ with the embedding, we have the desired $F_j$. Each $F_j$ has a real analytic flat extension $P_j$ according to the $C^{\omega}$ case. By Lemma \ref{lem:2}, $P_j$ converges in $C(\overline{M}, \End^+V)$, say to $P$. As in the proof of closedness, $P$ is $C^2$ in the interior and has curvature $0$.

If $F$ is $C^{k,\alpha}$ or $C^{\infty}$, the $P$ constructed in the previous paragraph is $C^{k,\alpha}$, respectively $C^{\infty}$ on $\overline{M}$, by Lemma \ref{lem R}. 
\end{proof}

\section{Factorization in doubly connected domains}\label{sec:db}

We prove Theorem \ref{thm:1.2} in this section. 
\begin{proof}
There exists a flat $P\in C(\overline{M}, \End^+V)$ with $P|_{\partial M}=F$ by Theorem \ref{thm:4}. The exponential map $e^{2\pi i z}$ is a universal covering map from the strip $$\{z\in \mathbb{C}:-\log r_2/2\pi <\Im(z)< -\log r_1/2\pi \}$$  to $M$. The composition $P(e^{2\pi i z})$ is flat on the strip, so by Lemma \ref{l,3}, $P(e^{2\pi iz})=H^*(z)H(z)$ where $H$ is holomorphic in the strip. Since $P(e^{2\pi iz})$ has period 1,
$$H^*(z+1)H(z+1)=H^*(z)H(z).$$This implies $${H^*}^{-1}(z+1)H^*(z)=H(z+1)H^{-1}(z).$$
But in the last equality, one side is holomorphic, the other is antiholomorphic, so both must be a constant, say $U$. Moreover, $$(U^*)^{-1}={[H(z+1)H^{-1}(z)]^*}^{-1}={H^*}^{-1}(z+1)H^*(z)=U,$$ so $U$ is unitary, and we have $UH(z)=H(z+1)$.

By Borel functional calculus (for example, see \cite[Chapter 12]{MR1157815}), $U=e^{iA}$ where $A\in \End^{\self}V$. Define $$K(z)=\exp\left(-iAz\right)\cdot H(z).$$ We have 
\begin{equation*}
\begin{aligned}
K(z+1)&=\exp\left(-iA(z+1)\right)\cdot H(z+1)=\exp\left(-iAz-iA\right)\cdot UH(z)\\
&=\exp\left(-iAz\right)\cdot H(z)=K(z).
\end{aligned}
\end{equation*}
That is, $K(z)$ is periodic.

As a result, $$P(w)=H^*H(\log w/2\pi i)=K^*(\log w/2\pi i)\cdot\exp\left[A(\log |w|^2/2\pi)\right]\cdot K(\log w/2\pi i).$$ Here $K(\log w/2\pi i)$ is single-valued, because $K$ is periodic. Since $A/2\pi$ is self adjoint, we have the desired factorization. (If $F$ is $C^{k,\alpha}$, then so are $P$ and $H$, therefore also $K$.)

\end{proof}

\bibliographystyle{amsalpha}
\bibliography{dom.bib}

\end{document}